\documentclass[11pt]{amsart}
\usepackage{graphicx,color,mathrsfs,amsmath,amssymb}

\setlength{\voffset}{0 cm} \setlength{\oddsidemargin}{-0.5cm}
\setlength{\evensidemargin}{-0.5cm} \setlength{\textwidth}{17cm}
\setlength{\textheight}{21.2cm}

\theoremstyle{plain}

\theoremstyle{plain}
\newtheorem{theorem}{Theorem} [section]

\newtheorem{lemma}[theorem]{Lemma}
\newtheorem{proposition}[theorem]{Proposition}

\theoremstyle{definition}
\newtheorem{definition}[theorem]{Definition}

\theoremstyle{remark}

\newtheorem{remark}[theorem]{Remark}
\numberwithin{theorem}{section}
\numberwithin{equation}{section}
\numberwithin{figure}{section}

\def\mean#1{\mathchoice
         {\mathop{\kern 0.2em\vrule width 0.6em height 0.69678ex depth -0.58065ex
                 \kern -0.8em \intop}\nolimits_{\kern -0.4em#1}}%
         {\mathop{\kern 0.1em\vrule width 0.5em height 0.69678ex depth -0.60387ex
                 \kern -0.6em \intop}\nolimits_{#1}}%
         {\mathop{\kern 0.1em\vrule width 0.5em height 0.69678ex
             depth -0.60387ex
                 \kern -0.6em \intop}\nolimits_{#1}}%
         {\mathop{\kern 0.1em\vrule width 0.5em height 0.69678ex depth -0.60387ex
                 \kern -0.6em \intop}\nolimits_{#1}}}

\def\N{\mathbb N}

\def\R{\mathbb R}
\def\L{\mathscr L}

\def\e{\varepsilon}

\def\l{\lambda}

\newcommand{\re}{\mathbb{R}}
\newcommand{\n}{\mathbb{N}}



\newcommand{\diam}{d_\Omega}
\newcommand{\supp}{{\rm supp}}

\newcommand{\negint}{{\int\negthickspace\negthickspace\negthickspace-}}

\begin{document}

\title[Global existence for the semigeostrophic equations]{A global existence result\\
for the semigeostrophic equations\\ in three dimensional convex domains
}

\author[L.\ Ambrosio]{Luigi Ambrosio}
\address{Scuola Normale Superiore,
p.za dei Cavalieri 7, I-56126 Pisa, Italy}
\email{l.ambrosio@sns.it}

\author[M.\ Colombo]{Maria Colombo}
\address{Scuola Normale Superiore,
p.za dei Cavalieri 7, I-56126 Pisa, Italy}
\email{maria.colombo@sns.it}

\author[G.\ De Philippis]{Guido De Philippis}
\address{Scuola Normale Superiore,
p.za dei Cavalieri 7, I-56126 Pisa, Italy}
\email{guido.dephilippis@sns.it}

\author[A.\ Figalli]{Alessio Figalli}
\address{Department of Mathematics,
The University of Texas at Austin, 1 University Station C1200,
Austin TX 78712, USA}
\email{figalli@math.utexas.edu}

%

\begin{abstract}
Exploiting
recent regularity estimates for the Monge-Amp\`ere equation, under some suitable assumptions on
the initial data
we prove global-in-time existence of Eulerian distributional solutions to the semigeostrophic equations
in 3-dimensional convex domains . 
\end{abstract}

\maketitle

\section{Introduction}
A simplified model for the motion of large scale atmospheric/oceanic flows inside a domain $\Omega\subset \R^3$
is given by the semigeostrophic equations.

Let $\Omega\subseteq \R^3$ be bounded open set with Lipschitz boundary. Then the 
semigeostrophic equations inside $\Omega$ are:
\begin{equation}\label{eqn:SGsystem1}
\begin{cases}
\partial_t u^g_t(x) +\bigl(u_t(x) \cdot \nabla\bigr) u^g_t(x) + \nabla p_t(x) = -J u_t(x) +m_t(x) e_3
\quad\quad &(x,t)\in \Omega \times (0,\infty)\\
\partial_t m_t(x) +\bigl(u_t(x) \cdot \nabla\bigr)m_t(x) =0
\quad\quad &(x,t)\in \Omega \times (0,\infty)\\
u^g_t(x) = J \nabla p_t(x) & (x,t)\in \Omega \times [0,\infty)\\
\nabla \cdot u_t(x) = 0  &(x,t)\in \Omega \times [0,\infty)\\
u_t (x) \cdot \nu_\Omega(x) = 0 &(x,t)\in \partial \Omega \times [0,\infty)\\
p_0(x)= p^0(x)   &x\in \Omega.
\end{cases}
\end{equation}
Here $p^0$ is the initial condition for $p$,\footnote{As it will be clear from the discussion later,
we do not need to specify any initial condition for $u$ and $m$.} $\nu_\Omega$ is the unit outward normal to $\partial \Omega$, $e_3=(0,0,1)^T$ is the third vector of the canonical basis in $\R^3$, $J$ is the matrix given by
\[
J:=
\begin{pmatrix}
0  &  -1  &  0   \\
1  &   0  &  0  \\
0  &   0  &  0  \\
\end{pmatrix},
\]
and the functions $u_t$, $p_t$, and $m_t$ represent respectively the
\emph{velocity}, the \emph{pressure} and the \emph{density}  of the atmosphere, while $u^g_t$ is the
so-called \emph{semi-geostrophic} wind.\footnote{We are
using the notation $f_t$ to denote the function
$f(t,\cdot)$.} Clearly the pressure is
defined up to a (time-dependent) additive constant.

Substituting the relation $u^g_t = J \nabla p_t$ and introducing the function 
\begin{equation}\label{defn:P}
P_t(x):=p_t(x)+\frac{1}{2}(x_1^2+x_2^2),
\end{equation}
the system \eqref{eqn:SGsystem1} can be rewritten in $\Omega \times [0,\infty)$ as
\begin{equation}\label{eqn:SGsystem2}
\begin{cases}
\partial_t \nabla P_t (x) + \nabla^2 P_t(x) u_t(x) = J (\nabla P_t(x) -x)\\
\nabla \cdot u_t (x) = 0 \\
u_t (x) \cdot \nu_\Omega (x) = 0\\
P_0(x) = p^0(x) + \frac{1}{2}(x_1^2+x_2^2).
\end{cases}
\end{equation}

Notice that, given a solution $(P,u)$ of \eqref{eqn:SGsystem2}, one easily recovers a solution of
\eqref{eqn:SGsystem1}: indeed $p_t$ can be obtained from $P_t$ through \eqref{defn:P} and
the density $m_t$ is given by $m_t = \partial_3 P_t$ (in particular, the third
component of the first equation in \eqref{eqn:SGsystem2} tells us
that $\partial_t m_t +\bigl(u_t \cdot \nabla\bigr)m_t =0$ is satisfied).

Energetic considerations (see \cite[Section 3.2]{cu}) show that it
is natural to assume that the function $P_t$  is convex on $\Omega$. This condition, first introduced by Cullen and Purser, is related in \cite{CP, CS}
 to a physical stability required for the semigeostrophic approximation to be appropriate.
 If we denote with
$\L_{\Omega}$ the (normalized) Lebesgue measure on $\Omega$, then
formally $\rho_t:=(\nabla P_t)_\sharp \L_{\Omega}$ \ (see, for example,  \cite[Appendix A]{acdf})
satisfies the following \emph{dual problem}
\begin{equation}\label{eqn:dualsystem}
\begin{cases}
\partial_t \rho_t +\nabla \cdot (U_t \rho_t) = 0 \\
U_t(x) = J(x-\nabla P_t^*(x))\\
\rho_t= (\nabla P_t)_{\sharp} \L_{\Omega}\\
P_0(x)= p^0(x)+ \frac{1}{2}(x_1^2+x_2^2).
\end{cases}
\end{equation}
Here $P^*_t$ is the convex conjugate of $P_t$, namely
\[
P_t^*(y):=\sup_{x\in\Omega} (y \cdot x-P_t(x)) \qquad\quad \forall y\in \R^3
\]
and $(\nabla P_t)_{\sharp} \L_{\Omega}$ is the push-forward of the measure $\L_\Omega$ through the map $\nabla P_t :\Omega \to \R^3$ defined as
\[
[(\nabla P_t)_{\sharp} \L_{\Omega}](A)=\L_{\Omega}\big((\nabla P_t)^{-1}(A)\big)\qquad \text{for all $A\subset\R^3$ Borel.}
\]

The dual problem is pretty well understood, and admits a solution
obtained via time discretization (see \cite{bebr,CG}). Moreover,
at least formally, given a solution $P_t$ of the dual problem \eqref{eqn:dualsystem} and setting
\begin{equation}\label{defn:ut}
u_t(x):= [\partial_t\nabla P_t^*](\nabla P_t(x)) + [\nabla^2 P_t^*]( \nabla P_t(x)) J(\nabla P_t(x) - x),
\end{equation}
the couple $(P_t, u_t)$ solves the semi-geostrophic problem \eqref{eqn:SGsystem1}. However,
because of the low regularity of the function $P_t$
the previous velocity field may a priori not be well defined,
and this creates serious difficulties for recovering a ``real solution'' from a ``dual solution''.

Still, a recent regularity result \cite{DepFi} can be applied to show that the map $P_t$ is $W^{2,1}$
in space, so that we can give a meaning to the second term in the definition of $u_t$.
More precisely, in \cite{DepFi} it is shown that $|D^2u|\log^k_+|D^2u|\in L^1_{\rm loc}$ for any $k$,
and following ideas developed
in \cite{acdf,Loe1}, we will be able to show that the function $P_t$ is regular enough
also in time, so that the couple $(P_t, u_t)$ is a true distributional solution of \eqref{eqn:SGsystem1}.

Let us point out that the regularity result in \cite{DepFi}
has been recently  extended, independently  in \cite{DFS} and \cite{S}, to 
$|D^2 u|\in L^\gamma$, where $\gamma >1$ depends on the local $L^\infty$ norm of $\log \rho_t$.
However, as we will better explain in Remark \ref{exp2},
in our situation there is no advantage in using this improvement, since the fact that $\gamma$ depends on
$\|\log \rho_t\|_{\infty,\rm loc}$ makes the estimates less readable.
For this reason we will rely only on the $L\log L$ integrability given by
\cite{DepFi}, as we previously did in \cite{acdf}.

The first existence result about distributional solutions to
the semigeostrophic equation is presented in \cite{acdf}, where the analysis
is carried out on the 2-dimensional torus (see also \cite{Loe2} where a short time existence result of smooth solutions
is proved in dual variables, and because of smoothness the
existence can be easily transferred to the initial variables).

The 3-dimensional case on the whole space $\R^3$, which is more physically relevant, presents additional difficulties. First, the
equation \eqref{eqn:SGsystem1} is much less symmetric compared to its 2-dimensional counterpart, because the action of Coriolis force $Ju_t$ regards only the first and the second space components.
Moreover, even considering regular initial data and velocities, regularity results require a finer regularization scheme, due to the non-compactness of the ambient space.

Our proofs are also based on some additional hypotheses on the decay of the probability measure $\rho_0 =  (\nabla P_0)_{\sharp} \L_{\Omega}$. This decay condition happens to be stable in time on solutions of the dual equation \eqref{eqn:dualsystem},
and allows us to perform a regularization scheme.

It would be extremely interesting to consider compactly supported initial data
$\rho_0 =  (\nabla P_0)_{\sharp} \L_{\Omega}$. However the
nontrivial evolution of the support of the solution
$\rho_t$ under \eqref{eqn:dualsystem} prevents us to 
apply the results in \cite{DepFi}
(which actually would be false in this situation), so at the moment this case
seems to require completely new ideas and ingredients.

\begin{definition}\label{sg-weak-eul}
Let $P:\Omega \times [0,\infty)\to\R$ and $u:\Omega \times
[0,\infty)\to\R^3$. We say that $(P,u)$ is a \emph{weak Eulerian
solution} of \eqref{eqn:SGsystem2} if:
\begin{enumerate}
\item[-] $|u|\in L^\infty_{\rm loc}((0,\infty),L^1_{\rm loc}(\Omega))$,
$P\in L^\infty_{\rm loc}((0,\infty),W^{1,\infty}_{\rm loc}(\Omega))$, and
$P_t(x)$ is convex for any $t \geq 0$;
\item[-] For every
$\phi\in C^\infty_c(\Omega \times [0,\infty))$, it holds
\begin{multline}\label{eqn:sg1-weak}
 \int_0^\infty\int_{\Omega}
 \nabla P_t(x) \Big\{\partial_t \phi_t(x) + u_t(x)\cdot \nabla
 \phi_t(x)\Big\}+J \Big\{\nabla P_t(x)-x \Big\} \phi_t(x)\, dx \, dt
  +\int_{\Omega} \nabla P_0(x) \phi_0(x) \, dx = 0;
\end{multline}
\item[-] For a.e. $t\in (0,\infty)$ it holds
\begin{equation}\label{eqn:sg2-weak}
\int_{\Omega} \nabla \psi(x)\cdot u_t(x) \, dx =0 \qquad \mbox{for
all $\psi\in C^\infty_c({\Omega})$.}
\end{equation}
\end{enumerate}
\end{definition}

\begin{remark}This definition is the classical notion of distributional solution for
\eqref{eqn:SGsystem2} except
for the fact that the boundary condition $u_t \cdot \nu_\Omega = 0$ is not
taken into account.
In this sense it may look natural to consider $\psi\in
C^\infty(\overline{\Omega})$ in \eqref{eqn:sg2-weak}, but since
we are only able to prove that the velocity $u_t$ is
locally in $L^1$, Equation \eqref{eqn:sg2-weak} makes sense only with
compactly supported $\psi$.
On the other hand, as we shall explain in Remark \ref{flow},
we will be able to prove that there exists a measure preserving Lagrangian
flow $F_t: \Omega \to \Omega$ associated to $u_t$, and such existence
result can be interpreted as a very weak formulation of the constraint
$u_t \cdot \nu_{\Omega}=0$.

As pointed out to us by Cullen, this weak boundary condition is actually
very natural: indeed, the classical  boundary condition would prevent the
formation of ``frontal singularities'' (which are physically expected to
occur), i.e. the fluid initially at the boundary would not be able to move
into the interior of the fluid, while this is allowed by our weak version
of the boundary condition.
\end{remark}

We can now state our main result.

\begin{theorem}\label{thm:main}
Let $\Omega\subseteq \R^3$ be a convex bounded open set,
and let $\L_\Omega$ be the normalized  Lebesgue measure restricted to $\Omega$,
that is $\L_{\Omega}(\Omega)=1$. Let $\rho_0$ be a probability density on $\R^3$ such that $\rho_0 \in L^{\infty}(\R^3)$, $1/\rho_0 \in L^\infty_{\rm loc}(\R^3)$ and
$$
\limsup_{|x| \to\infty} \left( \rho_0(x) |x|^K \right) <\infty
$$
for some $K>4$.  Let $\rho_t$ be a solution of \eqref{eqn:dualsystem} given by
Theorem~\ref{thm:dualeq}, 
$P_t^*:\R^3\to \R$ the unique convex
function such that
\[
P_t^*(0)=0 \quad \text{and}\quad(\nabla P_t^*)_\sharp(\rho_t\L^3)=\L_\Omega,
\]
and let $P_t:\R^3\to\R$ be its convex conjugate.

Then the vector field $u_t$ in \eqref{defn:ut} is well defined, and the couple $(P_t,u_t)$ is a weak Eulerian solution of
\eqref{eqn:SGsystem2} in the sense of
Definition~\ref{sg-weak-eul}.
\end{theorem}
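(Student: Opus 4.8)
The plan is to argue by \emph{regularization}. When the data are smooth and non-degenerate, the functions $P_t$ and $P_t^*$ are smooth and $\nabla P_t$, $\nabla P_t^*$ are mutually inverse diffeomorphisms; differentiating $\nabla P_t\circ\nabla P_t^*=\mathrm{id}$ in time and using $D^2P_t(x)=\big[D^2P_t^*(\nabla P_t(x))\big]^{-1}$, one checks directly from \eqref{defn:ut} that the first equation in \eqref{eqn:SGsystem2} holds identically, while the divergence-free condition $\int_\Omega\nabla\psi\cdot u_t\,dx=0$ follows from the dual equation \eqref{eqn:dualsystem} by the change of variables $y=\nabla P_t(x)$ (after which it reduces to an identity obtained by differentiating in time the $t$-independent relation $\int\psi(\nabla P_t^*)\,\rho_t\,dy=\int\psi\,d\L_\Omega$ and using \eqref{eqn:dualsystem}). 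Thus for smooth data $(P_t,u_t)$ is a classical, hence weak, solution of \eqref{eqn:SGsystem2}. Since in general $P_t^*$ is only $W^{2,1}$, which is the content of \cite{DepFi}, the right-hand side of \eqref{defn:ut} cannot be differentiated and \eqref{eqn:sg1-weak}--\eqref{eqn:sg2-weak} have to be read distributionally; the theorem will follow by approximating $\rho_0$ and passing to the limit, the limit being controlled by \emph{uniform} Monge-Amp\`ere estimates.

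First I would collect the properties of the dual solution $\rho_t$ furnished by Theorem~\ref{thm:dualeq}. The velocity $U_t(x)=J(x-\nabla P_t^*(x))$ is divergence free (both $Jx$ and $J\nabla P_t^*$ have zero divergence, $J$ being antisymmetric and $D^2P_t^*$ symmetric), so the continuity equation preserves all $L^p$ norms and $\|\rho_t\|_{L^\infty}=\|\rho_0\|_{L^\infty}$; since $\nabla P_t^*\in\overline\Omega$ one has $|U_t(x)|\le|x|+C_\Omega$, hence the associated Lagrangian flow expands at most exponentially on bounded time intervals, and this gives that $1/\rho_t\in L^\infty_{\rm loc}(\R^3)$ and that the decay $\rho_t(y)|y|^K\le C(t)$ propagates in time. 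Combined with $\det D^2P_t^*=|\Omega|\,\rho_t$ and the elementary bound $\int_{B_R}|D^2P_t^*|\le\int_{B_R}\Delta P_t^*\le CR^2$ (using $|\nabla P_t^*|\le C_\Omega$), the hypothesis $K>4$ gives that $\rho_t$ has finite first moment and, more generally, the weighted estimates $\int_{\R^3}|D^2P_t^*|\,(1+|y|)\,\rho_t\,dy<\infty$ locally uniformly in $t$: this is exactly where the decay exponent enters, and what makes the scheme close in the non-compact setting.

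Next, since $\rho_t$ is bounded and locally bounded below, Caffarelli's theory gives that $P_t^*$ is strictly convex and $C^{1,\beta}_{\rm loc}$, and \cite{DepFi} gives $P_t^*\in W^{2,1}_{\rm loc}$ with $|D^2P_t^*|\log^k_+|D^2P_t^*|\in L^1_{\rm loc}$; by duality $P_t$ solves a Monge-Amp\`ere equation with locally non-degenerate right-hand side, hence it too is strictly convex and $C^{1,\beta}_{\rm loc}(\Omega)$, so $\nabla P_t$ is continuous on $\Omega$ and inverts $\nabla P_t^*$. Following \cite{acdf,Loe1} one then proves the time regularity: $t\mapsto\rho_t$ is locally Lipschitz (say in the $1$-Wasserstein distance), hence $t\mapsto P_t^*$ is locally Lipschitz locally uniformly in $x$ by Monge-Amp\`ere stability, and $\nabla P_t^*$ is a locally absolutely continuous curve with $\partial_t\nabla P_t^*\in L^\infty_{\rm loc}\big((0,\infty);L^1_{\rm loc}(\R^3)\big)$. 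Consequently each term of \eqref{defn:ut} is meaningful: $\nabla P_t(x)\in\R^3$ for every $x\in\Omega$, and since $(\nabla P_t)_\sharp\L_\Omega=\rho_t\L^3$ is absolutely continuous, $\nabla^2P_t^*$ and $\partial_t\nabla P_t^*$ are defined $\L_\Omega$-a.e.\ along $\nabla P_t$; moreover the change of variables $\int_\Omega f(\nabla P_t)\,d\L_\Omega=\int_{\R^3}f\,\rho_t\,dy$ together with the weighted bounds gives $u_t\in L^1_{\rm loc}(\Omega)$ locally uniformly in $t$, so the integrability requirements of Definition~\ref{sg-weak-eul} hold (and $P_t$ is convex for every $t$).

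The theorem now follows by approximation. Mollify $\rho_0$ into smooth densities $\rho_0^\e$, positive on compact sets, with uniform $L^\infty$ bound and uniform decay; solve \eqref{eqn:dualsystem} to obtain $\rho_t^\e$ and the corresponding $P_t^{*,\e},P_t^\e,u_t^\e$. By the first paragraph $(P_t^\e,u_t^\e)$ is a weak solution, and by stability of the dual problem $\rho_t^\e\to\rho_t$ locally uniformly in $t$, whence $P_t^{*,\e}\to P_t^*$, $P_t^\e\to P_t$, $\nabla P_t^{*,\e}\to\nabla P_t^*$, $\nabla P_t^\e\to\nabla P_t$ locally uniformly (by the $C^{1,\beta}$ bounds) and $\partial_t\nabla P_t^{*,\e}\to\partial_t\nabla P_t^*$ in $L^1_{\rm loc}$. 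The crucial point, and the main difficulty, is that the $L\log L$ estimate of \cite{DepFi} is \emph{uniform} in $\e$, so $\{D^2P_t^{*,\e}\}$ is equi-integrable, and combined with $\det D^2P_t^{*,\e}=|\Omega|\rho_t^\e\to|\Omega|\rho_t=\det D^2P_t^*$ and the stability of the Monge-Amp\`ere equation (as in \cite{acdf}) this must be upgraded to $D^2P_t^{*,\e}\to D^2P_t^*$ \emph{strongly} in $L^1_{\rm loc}$, with enough uniformity in $t$ to integrate in time; this, together with the time-regularity of $\nabla P_t^*$, is precisely where the analysis of \cite{acdf,Loe1} has to be pushed through in the present three-dimensional, non-compact framework. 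Granting it, precomposition with $\nabla P_t^\e$ (a measure-controlled change of variables, since $(\nabla P_t^\e)_\sharp\L_\Omega=\rho_t^\e\L^3$ with $\rho_t^\e\le C$) and the weighted bounds yield $u_t^\e\to u_t$ in $L^1_{\rm loc}\big((0,\infty)\times\Omega\big)$, with $u_t$ as in \eqref{defn:ut}; passing to the limit in \eqref{eqn:sg1-weak}--\eqref{eqn:sg2-weak} for $(P_t^\e,u_t^\e)$ is then immediate, the only nonlinear term being a product of a locally uniformly convergent factor and an $L^1_{\rm loc}$-convergent one, and convexity of $P_t$ and the remaining bounds pass to the limit as well. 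The existence of a measure-preserving Lagrangian flow of $u_t$ --- which gives the weak form of the boundary condition discussed in Remark~\ref{flow} --- is obtained along the same lines, the flow being built from that of $U_t$ via the maps $\nabla P_t^*$.
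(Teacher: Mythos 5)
Your proposal takes a genuinely different route from the paper, and while parts of the strategy are sound, you hand over the hard work to an approximation scheme whose core steps are not filled in, and you miss the device on which the paper's proof actually hinges.

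The paper does \emph{not} regularize at the level of Theorem~\ref{thm:main}. It confines the approximation argument entirely to Proposition~\ref{prop:est}, whose purpose is to prove the quantitative $L\log L$ estimate \eqref{ts:loep} on $\partial_t\nabla P_t^*$ --- and, crucially, that regularization is \emph{decoupled}: the velocity $U_t$ produced by Theorem~\ref{thm:dualeq} is mollified to $U_t^n$ and fed into a linear continuity equation for $\rho_t^n$, with no requirement that $U_t^n=J(x-\nabla P_t^{n*})$. This is what makes the smooth a priori estimates (via Lemmas \ref{lemma:MAlin} and \ref{lemma:dec-est}) accessible; you can apply Theorem~\ref{thm:transport reg}(iii), \cite[Theorem 6.30]{GT}, etc. Your scheme instead proposes to mollify $\rho_0$ and solve the full nonlinear coupled dual system \eqref{eqn:dualsystem} smoothly, which would require a separate well-posedness and smoothness theory for the regularized Monge--Amp\`ere/transport coupling that neither the paper nor the references you cite provide in this setting; this is a real gap. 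Once the regularity estimates are in place, the paper proves Theorem~\ref{thm:main} directly, with no passage to the limit in the physical equation at all: one substitutes the test functions $\varphi_t(y)=y\,\phi_t(\nabla P_t^*(y))$ and $\varphi_t(y)=\phi(t)\,\psi(\nabla P_t^*(y))$ (admissible by Lemma~\ref{rmk:scontr} thanks to Proposition~\ref{prop:est}) into the weak dual formulation \eqref{eqn:sg-dual-weak}, and the push-forward identity $(\nabla P_t)_\sharp\L_\Omega=\rho_t\L^3$ turns the dual equation into \eqref{eqn:sg1-weak}--\eqref{eqn:sg2-weak} by a bare change of variables. That substitution trick is the heart of the proof and is entirely absent from your sketch. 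By contrast, passing to the limit in the physical weak formulation, as you propose, requires $u_t^\e\to u_t$ in $L^1_{\rm loc}$ in space-time, which in turn needs strong $L^1_{\rm loc}$ convergence of $D^2P_t^{*,\e}$ (handled in the paper via \cite{DeFi2}) \emph{and} strong convergence of $\partial_t\nabla P_t^{*,\e}$, plus a careful treatment of the composition with $\nabla P_t^\e$; you flag these as ``to be pushed through'' but do not show them, and the last one is genuinely delicate since only lower semicontinuity (Ioffe) of the $L\log L$ quantity is available, not convergence. In short: your high-level picture is reasonable, but it reorganizes the difficulty into two places (smoothness of the coupled regularized problem; strong space-time convergence of $u_t^\e$) that remain open in the sketch, whereas the paper sidesteps both by decoupling the regularization and by the direct test-function substitution.
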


\begin{remark}\label{flow}Following Cullen and Feldman one can give also a notion of Lagrangian
solution of the semigeostrophic equation. More precisely they show the
existence of a measure preserving flow $F_t :\Omega \to \Omega$ which solves a sort of Lagrangian version of
\eqref{eqn:SGsystem1} (see \cite{cufe} and \cite[Section 5]{acdf} for a more precise discussion).
Actually the flow they constructed has the explicit expression $F_t=\nabla P^*_t \circ G_t\circ \nabla P_0$,
where $G_t$ is the regular Lagrangian flow associated to the $BV$ vector field
$U_t=J(x-\nabla P^*_t)$, in the sense of  Ambrosio, Di Perna and Lions (see \cite{AM1, AM2, DL}).
In \cite[Section 5]{acdf} we showed, in the two dimensional periodic setting, that
for almost every $x$ the map $t \mapsto F_t(x)$ is absolutely continuous with derivative given
by $u_t(F_t(x))$. The proof of this fact can be almost verbatim extended to our contest,
showing that, for almost every $x\in \Omega$,  $t \mapsto F_t(x)$ is
locally absolutely continuous in $[0,\infty)$ with derivative given by $u_t(F_t(x))$. We leave the proof of this fact to the interested reader. Finally we remark that the uniqueness of such a flow (both according to the definition given in \cite{cufe} or in \cite{acdf}) is unknown.

\end{remark}

\smallskip
\noindent {\bf Acknowledgement.} L.A., G.D.P., and A.F.
acknowledge the support of the ERC ADG GeMeThNES.
A.F. was also supported by the NSF Grant DMS-0969962. M.C. and  G.D.P. also want to acknowledge the hospitality
of the University of Texas at Austin, where part of this work has been done.

\section{Regularity of optimal transport maps between convex sets of $\R^3$}
\label{sect:MA}

Throughout this paper, $\Omega\subseteq \R^3$ is a bounded convex open set, $\diam>0$
is fixed in such a way that 
$ \overline{\Omega} \subset B(0,\diam)$,
and $\L_\Omega$ denotes the normalized Lebesgue measure restricted  to $\Omega$.

In this section we recall some regularity results for optimal transport maps in $\R^3$ needed in the paper.

\begin{theorem}[Space regularity of optimal maps between convex sets]\label{thm:transport reg}
Let $\Omega_0$, $\Omega_1$ be open sets of $\R^3$, with $\Omega_1$ bounded and convex.
Let $\mu=\rho\L^3$ and $\nu=\sigma\L^3$
be probability densities such that $\mu( \Omega_0)=1$, $\nu (\Omega_1)=1$. Assume that the density $\rho$  is locally bounded both from above and from below in $\Omega_0$, namely that for every compact set $K\subset \Omega$ there exist $\lambda_0=\lambda_0 (K)$ and $\Lambda_0=\Lambda_0(K)$ satisfying
\[
0<\lambda_0\le \rho(x)\le \Lambda_0 \quad \forall\; x \in K.
\]
Futhermore, suppose that $\lambda_1 \leq \sigma(x)\leq \Lambda_1$ in $\Omega_1$.
Then the following properties hold true.
 \begin{enumerate}
 \item[(i)]  There exists a unique optimal transport map between $\mu$ and $\nu$, namely a unique (up to an additive constant) convex function $P^*:  \Omega_0\to \R$ such that $(\nabla P^*)_\sharp \mu = \nu$. Moreover $P^*$ is a strictly convex Alexandrov solution of
$$\det\nabla^2 P^*(x) = f(x),\qquad\text{with }f(x)=\frac{\rho(x)}{\sigma(\nabla P^*(x))}.$$
\item[(ii)]\label{thm:tr2} $P^*\in W_{\rm loc}^{2,1}(\Omega_0)\cap C^{1,\beta}_{\rm loc} (\Omega_0)$. More precisely, if $\Omega \Subset\Omega_0$ is an open set and $0<\lambda\leq \rho(x) \leq \Lambda<\infty$ in $\Omega$, then for any $k \in \N$ there exist  constants $C_1=C_1(k, \Omega,\Omega_1, \lambda, \Lambda, \lambda_1, \Lambda_1)$, $\beta=\beta  (\lambda, \Lambda, \lambda_1, \Lambda_1)$, and $C_2=C_2( \Omega,\Omega_1, \lambda, \Lambda, \lambda_1, \Lambda_1)$ such that
$$
\int_{\Omega}|\nabla^2 P^*| \log_+^k |\nabla^2 P^* | \,dx \leq C_1,
$$
and 
\[
\|P^*\|_{C^{1,\beta}(\Omega)}\le C_2.
\]

\item[(iii)] Let us also assume that $\Omega_0$, $\Omega_1$ are bounded and uniformly convex,
$ \partial\Omega_0, \partial\Omega_1 \in C^{2,1}$, $\rho\in C^{1,1}(\Omega_0)$, $\sigma \in C^{1,1}(\Omega_1)$,
and $\lambda_0 \leq \rho(x) \leq \Lambda_0$ in $\Omega_0$. Then
$$P^*\in C^{3,\alpha}(\Omega_0) \cap  C^{2,\alpha}(\overline{\Omega}_0) \qquad \forall \; \alpha \in (0,1),$$
and there exists a constant
$C$ which depends only on $\alpha, \Omega_0, \Omega_1,\lambda_0,\lambda_1, \|\rho\|_{C^{1,1}},
\|\sigma\|_{C^{1,1}}$  such that
$$\| P^*\|_{C^{3,\alpha}(\Omega_0)}\leq C \qquad \mbox{and} \qquad \| P^*\|_{C^{2,\alpha}(\overline{\Omega}_0)}\leq C.$$
Moreover, there exist positive constants $c_1$ and $c_2$ and $\kappa$,
depending only on $\lambda_0, \lambda_1, \|\rho\|_{C^{0,\alpha}}$,
and $\|\sigma\|_{C^{0,\alpha}}$, such that
$$c_1 Id \leq \nabla^2  P^*(x) \leq c_2 Id\qquad\forall \,x\in\Omega_0$$
and
\[
\nu_{\Omega_1}(\nabla P^*(x))\cdot \nu_{\Omega_0}(x) \ge \kappa\quad \forall \,x\in\partial \Omega_0. 
\]
\end{enumerate}
\end{theorem}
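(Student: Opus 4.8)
The plan is to assemble the three statements from the by-now classical regularity theory for the Monge--Amp\`ere equation and optimal transportation, adding only a short geometric computation for the last (transversality) estimate in (iii). For (i), I would invoke the Brenier--McCann theorem: since $\mu$ is absolutely continuous there is a unique (up to an additive constant) convex function $P^*$ with $(\nabla P^*)_\sharp\mu=\nu$, and since $\nu$ has compact support $\nabla P^*$ is in particular the optimal map for the quadratic cost. Because $\Omega_1$ is convex and $\lambda_1\le\sigma\le\Lambda_1$ on $\Omega_1$, Caffarelli's regularity theory applies: $\nabla P^*$ maps $\Omega_0$ onto $\Omega_1$ up to Lebesgue--null sets, $P^*$ is a strictly convex Alexandrov solution of the corresponding second boundary value problem, and the identity $\det\nabla^2 P^*=\rho/(\sigma\circ\nabla P^*)$ for the Monge--Amp\`ere measure of $P^*$ follows from $(\nabla P^*)_\sharp\mu=\nu$ by the standard change-of-variables argument. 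The hypothesis that $\rho$ is locally bounded away from $0$ and $\infty$ is exactly what lets Caffarelli's argument run on every compact subset of $\Omega_0$.

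For (ii), the $C^{1,\beta}_{\rm loc}$ bound is again Caffarelli's interior regularity theorem: on $\Omega\Subset\Omega_0$ with $\lambda\le\rho\le\Lambda$ the right-hand side $f=\rho/(\sigma\circ\nabla P^*)$ lies between $\lambda/\Lambda_1$ and $\Lambda/\lambda_1$, so by the affine invariance and the section-covering structure of the theory the estimate on $\Omega$ depends only on these numbers and on $\Omega,\Omega_1$. The $W^{2,1}_{\rm loc}$ estimate, including the gain $|\nabla^2 P^*|\log_+^k|\nabla^2 P^*|\in L^1_{\rm loc}$ together with the quantitative bound, is precisely the content of \cite{DepFi}, which I would quote directly: the dependence of $C_1$ on $k$, on $\Omega,\Omega_1$ and on $\lambda,\Lambda,\lambda_1,\Lambda_1$ is exactly as stated there. (The sharper $L^\gamma$ bounds of \cite{DFS, S} could replace this but, as recalled in the Introduction, bring no advantage here.)

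For (iii), I would first apply Caffarelli's global (boundary) regularity theorem for the second boundary value problem: with $\Omega_0,\Omega_1$ uniformly convex of class $C^{2,1}$, $\rho,\sigma\in C^{1,1}$ bounded below, one gets $P^*\in C^{2,\alpha}(\overline{\Omega}_0)$ for every $\alpha\in(0,1)$ with the stated dependence of constants; in particular $\nabla P^*:\overline{\Omega}_0\to\overline{\Omega}_1$ is a homeomorphism carrying $\partial\Omega_0$ onto $\partial\Omega_1$, and the same statement applied to the conjugate potential shows $\nabla P=(\nabla P^*)^{-1}$ is $C^{1,\alpha}$ up to $\partial\Omega_1$ as well. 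Plugging $\nabla P^*\in C^{1,\alpha}$ and $\rho,\sigma\in C^{1,1}$ back into the equation gives $f\in C^{1,\alpha}_{\rm loc}$; since the $C^{2,\alpha}(\overline{\Omega}_0)$ bound together with $\det\nabla^2 P^*=f\ge\lambda_0/\Lambda_1>0$ makes the linearized operator uniformly elliptic, interior Schauder estimates upgrade $P^*$ to $C^{3,\alpha}_{\rm loc}(\Omega_0)$. The two-sided Hessian bound is then immediate: $\nabla^2 P^*\le c_2\,Id$ from the $C^{2,\alpha}(\overline{\Omega}_0)$ estimate, and $\nabla^2 P^*\ge c_1\,Id$ because the eigenvalues multiply to $f$, which is bounded below. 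Finally, for the transversality estimate fix $x_0\in\partial\Omega_0$, set $y_0=\nabla P^*(x_0)\in\partial\Omega_1$, $\nu_0=\nu_{\Omega_0}(x_0)$, $\nu_1=\nu_{\Omega_1}(y_0)$; since $\nabla P^*$ maps $\partial\Omega_0$ into $\partial\Omega_1$ and $\nabla^2 P^*(x_0)$ is invertible, the symmetric matrix $\nabla^2 P^*(x_0)$ sends $\nu_0^\perp$ onto $\nu_1^\perp$, hence $\nabla^2 P^*(x_0)\,\nu_1=\mu\,\nu_0$ for some scalar $\mu$. Pairing with $\nu_0$ gives $\mu=\nu_0\cdot\nabla^2 P^*(x_0)\nu_1$, which is nonzero (as $\nabla^2 P^*(x_0)>0$) and, testing the convexity of $\Omega_1$ at $y_0$ along $\nabla P^*(x_0-t\nu_0)\in\Omega_1$ for small $t>0$, also nonnegative; hence $\mu>0$. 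Pairing instead with $\nu_1$ and using $c_1\,Id\le\nabla^2 P^*(x_0)\le c_2\,Id$ gives $\nu_1\cdot\nu_0=\mu^{-1}\,\nu_1\cdot\nabla^2 P^*(x_0)\nu_1\ge c_1/c_2$, so $\kappa=c_1/c_2$ works.

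Since the statement is essentially a compilation of known results, I do not expect a genuine obstacle; the two points deserving care are (a) checking that the constants in (i) and (ii) are governed by the \emph{local} bounds on $\rho$, which rests on the affine invariance and section-covering structure of Caffarelli's theory, and (b) the short geometric computation yielding $\kappa$ in (iii), the one ingredient that is not a direct quotation.
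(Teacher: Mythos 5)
Your proposal matches the paper's own treatment, which is essentially a paragraph of citations: McCann \cite{MC} for (i), Caffarelli \cite{CA1,CA2,CA4} and De Philippis--Figalli \cite{DepFi} for (ii), and Caffarelli \cite{CA3} together with Urbas \cite{Ur} for (iii). Two remarks. First, in (i) your sentence ``since $\nu$ has compact support $\nabla P^*$ is in particular the optimal map for the quadratic cost'' is imprecise, and the paper explicitly warns against this reading: since $\mu$ is not assumed to have finite second moment, $\int|x-\nabla P^*(x)|^2\,d\mu$ may well be $+\infty$ (the boundedness of $\Omega_1$ does not help, as $|x-\nabla P^*(x)|^2\gtrsim |x|^2$ for $|x|$ large), so there is no minimization problem in the usual Wasserstein sense. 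What one invokes is McCann's theorem on monotone measure-preserving maps, which is exactly the existence/uniqueness statement needed without any moment hypothesis; you should drop the claim about quadratic-cost optimality. Second, in (iii) the short geometric derivation of the transversality bound $\nu_1\cdot\nu_0\ge c_1/c_2$ from the two-sided Hessian bound is a nice, correct, self-contained alternative to simply quoting Urbas as the paper does (one small slip: $\mu\ne 0$ follows from invertibility of $\nabla^2 P^*(x_0)$ applied to $A\nu_1=\mu\nu_0$, not from positivity of $\nu_0\cdot A\nu_1$, which can vanish for a positive definite $A$ and orthogonal $\nu_0,\nu_1$); it buys a transparent quantitative constant $\kappa=c_1/c_2$, at the price of having to establish $C^{2,\alpha}(\overline{\Omega}_0)$ regularity and the Hessian bounds first, which in turn rest on the Caffarelli--Urbas boundary theory where obliqueness is usually proved as an input rather than a corollary.
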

The first statement is standard optimal transport theory, see \cite{CA4,MC}, except for the fact that we are not assuming that the second moment of $\mu$ is finite, thus the classical Wasserstein distance from $\mu$ and $\nu$ can be infinite. Nevertheless the existence of an ``optimal'' map is provided by \cite{MC}. The $W^{2,1} $ part of the second statement follows from a recent regularity result about solutions of the Monge-Amp\`ere equation \cite{DepFi}, while the $C^{1,\beta}$ regularity was proven by Caffarelli in \cite{CA1,CA2, CA4}.
The regularity up to the boundary and the oblique derivative condition of the third statement have been proven by Caffarelli \cite{CA3} and Urbas \cite{Ur}.
\begin{remark}\label{rmk:set-conv}
By compactness and a standard contradiction argument,
the constants $C_1$ and $C_2$ in the statement (ii) of the previous
theorem remain uniformly bounded if $\Omega_1$ varies in a compact class (with respect,
for instance, to the Hausdorff distance) of convex sets. In particular, let  $\Omega_1^n$ be a
sequence of open convex sets which converges to $\Omega_1$ with respect to the Hausdorff distance
and  $\sigma_n$ a sequence of densities supported on $\Omega_1^n$ with $\lambda_1 \leq \sigma_n
\leq \Lambda_1$ on $\Omega_1^n$ which converge to $\sigma$ in $L^1(\R^3)$. Then the estimates in Theorem
\ref{thm:transport reg}(ii) hold true with constants independent of $n$.
\end{remark}

\begin{remark}\label{exp1}
As already mentioned in the introduction, in statement (ii) the optimal regularity is that for every $\Omega\Subset \Omega_0$ and  $0<\lambda\leq \rho(x) \leq \Lambda<\infty$ in $\Omega$, there exist $\gamma (\lambda, \Lambda, \lambda_1, \Lambda_1)>1$ and $C(\Omega, \Omega_1, \lambda, \Lambda, \lambda_1, \Lambda_1)$, such that
\[
\int_{\Omega} |D^2 u|^{\gamma} \le C.
\]
However, as explained in Remark \ref{exp2},
this improvement does not give any advantage.
\end{remark}

\section{The dual problem and the regularity of the velocity field}\label{sect:dual}

In this section we recall some properties of solutions of
\eqref{eqn:dualsystem}, and we show the $L^1$ integrability
of the velocity field $u_t$ defined in \eqref{defn:ut}.

We have the following result whose proof follows adapting the argument of \cite{bebr, CG}, where compactly supported initial data are
considered. Since the velocity $U_t$ has at most linear growth, the speed of propagation is locally finite and the proof readily extends to
general probability densities.

\begin{theorem}[Existence of solutions of \eqref{eqn:dualsystem}]\label{thm:dualeq}  Let $P_0:\R^3\to\R$ be a convex function such that $(\nabla P_0)_\sharp\L_{\Omega}\ll\L^3$.
Then there exist convex functions $P_t,P_t^* :\R^3\to\R$ such that $(\nabla P_t)_\sharp
\L_\Omega=\rho_t\L^3$,  $(\nabla P_t^*)_\sharp
\rho_t=\L_{\Omega}$, $U_t(x)=J(x-\nabla
P_t^*(x))$, and $\rho_t$ is a distributional solution to
\eqref{eqn:dualsystem}, namely
\begin{equation}\label{eqn:sg-dual-weak}
 \int \int_{\R^3}
\Big\{ \partial_t \varphi_t(x) + \nabla \varphi_t(x) \cdot U_t(x)
\Big\} \rho_t(x)\, dx \, dt + \int_{\R^3} \varphi_0(x) \rho_0(x)
\, dx = 0
\end{equation}
for every $\varphi \in C^{\infty}_c(\R^3\times [0,\infty))$.

Moreover, the following regularity properties hold:
\begin{enumerate}
\item[(i)] $\rho_t\L^3 \in C([0,\infty), \mathcal P_w (\R^3))$, where
$\mathcal P_w(\R^3)$ is the space of probability measures  endowed with the \emph{weak} topology induced by the duality
with $C_0(\R^3)$;
\item[(ii)] $P^*_t-P^*_t(0) \in L^\infty_{\rm loc}([0,\infty),W_{\rm loc}^{1,\infty}(\R^3))\cap C([0,\infty),W_{\rm loc}^{1,r}(\R^3))$
for every $r\in [1,\infty)$; 
\item[(iii)] $|U_t(x)| \leq |x| +\diam$ for almost every $x\in \R^3$, for all $t\geq 0$.
\end{enumerate}
\end{theorem}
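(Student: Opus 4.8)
The plan is to prove Theorem~\ref{thm:dualeq} by a time-discretization (Euler scheme) in the spirit of \cite{bebr,CG}, carefully tracking the dependence on the (possibly unbounded) support of the data. First I would set up the discrete scheme: fix a time step $h>0$, set $P_0^h := P_0$, $\rho_0^h := \rho_0$, and given $\rho_{kh}^h$ with its associated convex potentials $P_{kh}^h$, $(P_{kh}^h)^*$ (existence and uniqueness up to constants of these potentials follows from standard optimal transport theory, since $(\nabla P_{kh}^h)_\sharp \L_\Omega = \rho_{kh}^h \L^3$ and $\rho_{kh}^h \ll \L^3$ is preserved along the scheme as the flow is a diffeomorphism on $\Omega$), define the velocity $U_{kh}^h(x) := J(x - \nabla (P_{kh}^h)^*(x))$ and push forward: $\rho_{(k+1)h}^h := (X_h)_\sharp (\rho_{kh}^h \L^3)$ where $X_h(x) := x + h\,U_{kh}^h(x)$. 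Then I would interpolate in time (piecewise constant for $\rho^h$, and piecewise linear along the one-step maps for the flow) to obtain curves $\rho^h_t$, $U^h_t$ defined for all $t \geq 0$.

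The core estimates are: (a) the linear growth bound on $U_t$, which I would get from $|\nabla P_t^*(x)| \leq \diam$ for $x \in \supp \rho_t$ (since $\nabla P_t^*$ takes values in $\overline\Omega \subset B(0,\diam)$, being the optimal map \emph{to} $\L_\Omega$), giving $|U_{kh}^h(x)| \leq |x| + \diam$ pointwise on the support at each step — this is item (iii) and it holds at the discrete level uniformly in $h$; (b) a consequence of (a) is finite speed of propagation: $\supp \rho_{kh}^h \subseteq \{|x| \leq (1+h)^k(R_0 + \diam)\}$ if $\supp\rho_0 \subseteq B(0,R_0)$, so on any finite time interval $[0,T]$ the supports stay in a fixed ball $B_T$ (for general, non-compactly-supported $\rho_0$ one truncates, or more precisely one uses that the tail decay $\rho_0(x)|x|^K$ bounded is propagated, but at the level of \emph{existence} of the dual solution the linear growth of $U$ alone suffices to make everything local); (c) a uniform-in-$h$ tightness/compactness estimate: the second moment (or, given only the decay hypothesis, a suitable moment of order $<K$) grows at most like $e^{Ct}$, which together with the mass-transport structure gives equi-continuity of $t \mapsto \rho^h_t$ in the Wasserstein-like metric (or simply weak-$*$, localized), yielding item (i). I would also record that $\|\rho_{kh}^h\|_\infty$ stays bounded: the one-step map $X_h$ has Jacobian $\det(Id + hJ(Id - \nabla^2 (P^*)^2)) $... more carefully, since $U$ is divergence-free up to lower order ($\nabla \cdot (Jx) = 0$ and $\nabla\cdot(J\nabla P^*)=0$ as $J$ is antisymmetric and $\nabla^2 P^*$ symmetric), the flow is measure-preserving in the limit, so $\|\rho_t\|_\infty \leq \|\rho_0\|_\infty$ and similarly $1/\rho_t$ is locally bounded — this is what later sections need and should be extracted here.

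Next I would pass to the limit $h \to 0$. By the uniform bounds, up to subsequences $\rho^h_t \to \rho_t$ in $C([0,T],\mathcal P_w)$ for every $T$ (by a diagonal argument and Arzelà–Ascoli, using tightness from the moment bound), and I must show the limit solves the weak formulation \eqref{eqn:sg-dual-weak}. The issue is passing to the limit in the nonlinear term $\nabla\varphi_t \cdot U^h_t \,\rho^h_t$: here I would use that $U^h_t(x) = J(x - \nabla (P^h_t)^*(x))$ and that the potentials $(P^h_t)^*$ converge. The key lemma (this is the main obstacle) is the \emph{stability of optimal potentials}: if $\rho^h_t \to \rho_t$ weakly with uniformly bounded densities and uniformly bounded-below on compacts, and all are supported with $(\nabla (P^h_t)^*)_\sharp \rho^h_t = \L_\Omega$, then $(P^h_t)^* \to P^*_t$ locally uniformly and $\nabla (P^h_t)^* \to \nabla P^*_t$ in $L^1_{\rm loc}$ (indeed in $L^r_{\rm loc}$ for all finite $r$, since the gradients are uniformly bounded by $\diam$ on the supports). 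This follows from the standard stability of optimal maps under weak convergence of measures (the potentials, normalized by $P^*_t(0)=0$, are equi-Lipschitz hence precompact in $C_{\rm loc}$; any limit is a convex function whose gradient pushes $\rho_t$ to $\L_\Omega$, hence \emph{is} $P^*_t$ by uniqueness from Theorem~\ref{thm:transport reg}(i)) — I would invoke this together with Remark~\ref{rmk:set-conv}. With $U^h_t \to U_t$ in $L^1_{\rm loc}(\R^3 \times [0,T])$ and $\rho^h_t \to \rho_t$ weakly-$*$ with bounded densities, the product converges in the sense of distributions against $C^\infty_c$ test functions. Finally, the continuity in time $P^*_t - P^*_t(0) \in C([0,\infty), W^{1,r}_{\rm loc})$ (item (ii)) follows by the same stability applied to the continuous curve $t \mapsto \rho_t$, which is itself weakly continuous by (i); the $L^\infty_{\rm loc}$-in-time, $W^{1,\infty}_{\rm loc}$-in-space bound is again just $|\nabla P^*_t| \leq \diam$. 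I expect the only genuinely delicate point to be making the stability-of-potentials argument fully rigorous on unbounded domains — but the finite-speed-of-propagation bound (b) reduces everything to a fixed large ball on each $[0,T]$, after which it is the classical argument.
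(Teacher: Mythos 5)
Your overall plan — a time-discretized splitting scheme in the spirit of Benamou--Brenier and Cullen--Gangbo, extended to non-compactly-supported $\rho_0$ by exploiting the linear growth $|U_t(x)|\le|x|+\diam$ and the resulting finite speed of propagation, then stability of optimal potentials to pass to the limit — is precisely the route the paper indicates. The paper in fact gives essentially no detail beyond a one-paragraph pointer to \cite{bebr,CG} together with the finite-speed observation, so your proposal is really an attempt to fill in that sketch, and the architecture (linear growth $\Rightarrow$ (iii), moment/tightness bounds $\Rightarrow$ (i), stability of potentials with $|\nabla P_t^*|\le\diam$ $\Rightarrow$ (ii), and convergence of $U_t^h\rho_t^h$ via $L^1_{\rm loc}$ convergence of $\nabla(P^h_t)^*$ against weakly convergent $\rho^h_t$) is the right one.

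There is, however, a concrete gap in your discrete step. You set $X_h(x):=x+h\,U^h_{kh}(x)$ with $U^h_{kh}(x)=J\bigl(x-\nabla(P^h_{kh})^*(x)\bigr)$ and define $\rho^h_{(k+1)h}:=(X_h)_\sharp(\rho^h_{kh}\L^3)$. The Euler map $X_h$ need not be injective, nor even have an a.e.\ nonvanishing Jacobian: $\nabla X_h=Id+hJ-hJ\nabla^2(P^h_{kh})^*$, and $(P^h_{kh})^*$ is in general only $W^{2,1}_{\rm loc}$, so $\nabla^2(P^h_{kh})^*$ is unbounded and $\det\nabla X_h$ can change sign for every fixed $h>0$. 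Consequently $\rho^h_{(k+1)h}$ may fail to be absolutely continuous with respect to $\L^3$, or have a locally unbounded density — and then the optimal potential $(P^h_{(k+1)h})^*$, which you need at the very next step, is either undefined or loses the regularity you later invoke. Your own acknowledgment that the step is only ``measure-preserving in the limit'' does not repair this, because the breakdown occurs at each fixed $h$, not merely in the estimate. The fix is the one implicit in \cite{bebr,CG} and consistent with the DiPerna--Lions/Ambrosio theory used elsewhere in the paper: replace the Euler map by the exact flow on $[kh,(k+1)h]$ of the \emph{frozen} vector field $U^h_{kh}$. Since $U^h_{kh}\in BV_{\rm loc}$ and $\nabla\cdot U^h_{kh}=\operatorname{tr}\bigl(J(Id-\nabla^2(P^h_{kh})^*)\bigr)=0$ (because $J$ is antisymmetric and $\nabla^2(P^h_{kh})^*$ is symmetric), this flow exists and is measure-preserving, so absolute continuity, the $L^\infty$ bound $\|\rho^h_t\|_\infty\le\|\rho_0\|_\infty$, and the tail decay all propagate exactly along the scheme. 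With that single replacement the rest of your argument goes through as written. (A minor point: for stability of $\nabla(P^h_t)^*$ in $L^1_{\rm loc}$ you do not need the local lower bound on the densities that you mention — equi-Lipschitzness of the normalized potentials, Arzel\`a--Ascoli, and uniqueness of the optimal map to $\L_\Omega$ already suffice, and in fact Theorem \ref{thm:dualeq} is stated without any lower bound on $\rho_0$. Remark \ref{rmk:set-conv} concerns a varying target set and is not needed here since the target $\L_\Omega$ is fixed.)
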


Observe that, by Theorem \ref{thm:dualeq}(ii), $t \mapsto \rho_t\L^3$ is weakly continuous,
so $\rho_t$ is a well-defined function \textit{for every} $t\geq 0$.
Further regularity properties of $P_t$ and $P_t^*$ with respect to
time will be proven in Proposition~\ref{prop:est}.

In the proof of Theorem~\ref{thm:main} we will need to test  with
functions which are merely $W^{1,1}$ with compact support. This is made possible by a simple approximation argument which we leave to the reader, see \cite[Lemma 3.2]{acdf}.

\begin{lemma}\label{rmk:scontr}
Let $\rho_t$ and $P_t$ be as in Theorem~\ref{thm:dualeq}. Then
\eqref{eqn:sg-dual-weak} holds for every $\varphi \in
W^{1,1}(\R^3\times [0,\infty))$ which is compactly supported in time and space, where now 
$\varphi_0(x)$ has to be understood in the sense of
traces.
\end{lemma}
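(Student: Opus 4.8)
The plan is to obtain \eqref{eqn:sg-dual-weak} for general $\varphi\in W^{1,1}(\R^3\times[0,\infty))$ with compact support in time and space by approximating $\varphi$ with smooth compactly supported functions and passing to the limit in the already-established weak formulation for test functions in $C^\infty_c(\R^3\times[0,\infty))$. First I would fix $\varphi$ as in the statement, say $\supp\varphi\subseteq B(0,R)\times[0,T]$ for suitable $R,T$, and choose a standard mollification $\varphi^\e:=\varphi * \eta_\e$ (in all $3+1$ variables, after first extending $\varphi$ to negative times, e.g. by even reflection in $t$, so that the trace $\varphi_0$ is respected; alternatively one extends by $\varphi(x,0)$ for $t<0$). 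Then $\varphi^\e\in C^\infty$, is supported in a fixed slightly larger compact set $B(0,R+1)\times[0,T+1]$ for $\e$ small, and one has the convergences $\varphi^\e\to\varphi$, $\nabla\varphi^\e\to\nabla\varphi$, $\partial_t\varphi^\e\to\partial_t\varphi$ strongly in $L^1(\R^3\times[0,\infty))$, as well as $\varphi^\e(\cdot,0)\to\varphi_0$ in $L^1(\R^3)$ in the sense of traces (this is the one point requiring a little care, and is exactly the content of the approximation argument sketched in \cite[Lemma 3.2]{acdf}).

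Next I would plug $\varphi^\e$ into the weak formulation \eqref{eqn:sg-dual-weak}, which is legitimate since $\varphi^\e\in C^\infty_c(\R^3\times[0,\infty))$ once we truncate smoothly in time near $t=T+1$ (this truncation can be taken fixed, independent of $\e$, since $\varphi$ already vanishes for $t\ge T$). It remains to let $\e\to0$ in
\[
\int_0^\infty\!\!\int_{\R^3}\big\{\partial_t\varphi^\e_t+\nabla\varphi^\e_t\cdot U_t\big\}\rho_t\,dx\,dt+\int_{\R^3}\varphi^\e_0\,\rho_0\,dx=0.
\]
The last term converges to $\int_{\R^3}\varphi_0\rho_0\,dx$ because $\varphi^\e_0\to\varphi_0$ in $L^1$ and $\rho_0\in L^\infty(\R^3)$. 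For the first term I would use that $\rho_t\in L^\infty$ uniformly, more precisely that $\sup_{t\le T}\|\rho_t\|_{L^\infty(\R^3)}<\infty$ (this follows from the construction in Theorem~\ref{thm:dualeq} together with the preservation of the $L^\infty$ bound along the flow of the divergence-free—after the change of variables—structure; in any case $\rho_t\L^3$ is a probability measure for every $t$, and on the fixed compact time-space set where the integrand lives one only needs an $L^\infty$ bound on $\rho$, or even just total mass one if one argues via the weak-$*$ continuity in Theorem~\ref{thm:dualeq}(i)). Then $\partial_t\varphi^\e\to\partial_t\varphi$ in $L^1$ against the bounded density $\rho_t$ handles the $\partial_t$ term. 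For the transport term, $|U_t(x)|\le|x|+\diam\le R+1+\diam$ on the support, by Theorem~\ref{thm:dualeq}(iii), so $U_t$ is bounded there; combined with $\nabla\varphi^\e\to\nabla\varphi$ in $L^1$ and $\rho_t\in L^\infty$, the term $\int\!\int\nabla\varphi^\e_t\cdot U_t\,\rho_t$ converges to $\int\!\int\nabla\varphi_t\cdot U_t\,\rho_t$. Passing to the limit gives \eqref{eqn:sg-dual-weak} for $\varphi$.

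The only genuinely delicate point is the behaviour of the trace term under mollification: one must ensure that the smooth approximants $\varphi^\e$ have $\varphi^\e(\cdot,0)\to\varphi_0$ in $L^1(\R^3)$, which is where the extension of $\varphi$ to $t<0$ matters — a naive mollification in $t$ of a function defined only on $[0,\infty)$ would see a ``jump'' at $t=0$ and produce $\varphi^\e(\cdot,0)\to\tfrac12\varphi_0$ rather than $\varphi_0$. The fix is standard (reflect evenly in $t$, or extend by the trace value, so that no jump is created), and I would simply invoke \cite[Lemma 3.2]{acdf} for the details, as the paper suggests. Everything else is routine dominated-convergence on a fixed compact set, using only the uniform $L^\infty$ bound on $\rho_t$ and the linear growth bound on $U_t$ from Theorem~\ref{thm:dualeq}, both of which are available for $t$ in the bounded interval where the test function is supported.
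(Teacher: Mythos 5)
Your proof is correct and is precisely the approximation argument the paper itself indicates by pointing to \cite[Lemma 3.2]{acdf}: mollify after an even reflection (or trace-constant extension) in $t$ so that the trace at $t=0$ is respected, and then pass to the limit on the fixed compact space-time support using the $L^\infty$ bound on $\rho_t$ and the linear growth bound on $U_t$ from Theorem~\ref{thm:dualeq}(iii). One small caution on your parenthetical alternative: knowing only that $\rho_t\L^3$ has total mass one is \emph{not} enough to justify the limit, since $W^{1,1}(\R^3)$ does not embed in $L^\infty$ and hence $\partial_t\varphi_t\,\rho_t$ need not even be integrable for a merely $L^1$ density; the uniform $L^\infty$ bound on $\rho_t$ (available because $\rho_0\in L^\infty$ in the setting of Theorem~\ref{thm:main} and $U_t=J(x-\nabla P^*_t)$ is divergence-free, so $\|\rho_t\|_\infty=\|\rho_0\|_\infty$) is genuinely required and should be stated as a hypothesis you are using.
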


\begin{lemma}[Space-time regularity of transport]\label{lemma:MAlin}
Let $\Omega \subseteq \R^3$ be a uniformly convex bounded domain
with $\partial\Omega \in C^{2,1}$, let  $R>0$, and consider
$\rho\in C^\infty(\overline{B(0,R)} \times [0,\infty))$
and $U\in
C^\infty_c(B(0,R) \times [0,\infty);\R^3)$ satisfying
$$\partial_t \rho_t +\nabla \cdot(U_t\rho_t) = 0 \quad\quad\text{in $B(0,R)\times [0,\infty).$}$$
Assume that $\int_{B(0,R)}\rho_0\,dx=1$, and that for every $T>0$ there exist $\lambda_T$ and $\Lambda_T$ such that
$$0<\lambda_T \leq \rho_t(x) \leq \Lambda_T<\infty \quad\quad \forall\, (x,t)\in B(0,R)\times [0,T].$$
Consider the
 convex conjugate maps $P_t$ and $P_t^*$ such that 
 $(\nabla P_t)_\sharp \L_{\Omega}=\rho_t$ and
 $(\nabla P_t^*)_\sharp
\rho_t=\L_{\Omega}$ (unique up to additive constants in $\Omega$ and $B(0,R)$ respectively). Then:
\begin{enumerate}
\item[(i)] $P^*_t -\negint_{B(0,R)}P_t^* \in {\rm Lip_{\rm loc}}
([0,\infty);C^{2, \alpha}(\overline{B(0,R)}))$.
\item[(ii)] The following linearized Monge-Amp\`ere equation holds for every $t\in [0,\infty)$:
 \begin{equation}
 \begin{cases}
 \nabla \cdot \bigl(\rho_t (\nabla^2P_t^*)^{-1}\partial_t \nabla P_t^*\bigr)  = - \nabla \cdot(\rho_t U_t)
\quad\quad &\text {in $ B(0,R)$} \\
\rho_t (\nabla^2P_t^*)^{-1}\partial_t \nabla P_t^* \cdot \nu= 0 \quad\quad &\text {on $ \partial B(0,R)$.}
\end{cases}
 \label{ts:MAlin}
 \end{equation}
\end{enumerate}
\end{lemma}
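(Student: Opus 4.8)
The plan is to first establish the smooth regularity statement (i) and then use it to derive the linearized equation (ii) by differentiating the Monge-Amp\`ere equation in time.

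\smallskip

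\textbf{Step 1: Smooth regularity of $P_t^*$ in space, uniformly in time.}
Under the hypotheses of the lemma, for each fixed $t$ the map $\nabla P_t^*$ is the optimal transport map sending $\rho_t$ to $\L_\Omega$; since $\rho_t$ is smooth and bounded above and below on $\overline{B(0,R)}$, $\Omega$ is uniformly convex with $C^{2,1}$ boundary, and $\L_\Omega$ is smooth and bounded above and below, Theorem~\ref{thm:transport reg}(iii) (applied with the roles of $\Omega_0$, $\Omega_1$ being $B(0,R)$ and $\Omega$, and the density $\sigma$ the normalized characteristic of $\Omega$, which can be taken smooth and uniformly convex by a further mollification and Remark~\ref{rmk:set-conv}) gives $P_t^* \in C^{2,\alpha}(\overline{B(0,R)}) \cap C^{3,\alpha}(B(0,R))$ with $c_1 \Id \le \nabla^2 P_t^* \le c_2 \Id$ and the oblique-derivative bound, with constants depending only on $\lambda_T$, $\Lambda_T$ and the $C^{1,1}$ norms of $\rho_t$ on $[0,T]$. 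In particular these bounds are locally uniform in $t$.

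\smallskip

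\textbf{Step 2: Lipschitz dependence in time.}
To upgrade this to ${\rm Lip}_{\rm loc}([0,\infty); C^{2,\alpha}(\overline{B(0,R)}))$ after normalizing by the average, fix $s,t$ and compare $P_s^*$ and $P_t^*$. Both solve Monge-Amp\`ere problems with the same target $\L_\Omega$ but different source densities $\rho_s$, $\rho_t$; since $t \mapsto \rho_t$ is Lipschitz in, say, $C^{0,\alpha}(\overline{B(0,R)})$ (which follows from the continuity equation and the smoothness/compact support of $U$, together with the uniform bounds on $\rho$), the difference $w := P_t^* - P_s^*$ satisfies a uniformly elliptic linear equation of the form $a^{ij}(x)\partial_{ij} w = g$ where $a^{ij}$ is a convex combination of the cofactor matrices (which are uniformly elliptic and $C^{0,\alpha}$ by Step 1) and $\|g\|_{C^{0,\alpha}} \lesssim |t-s|$, with a conormal/oblique boundary condition; Schauder estimates then give $\|w - \fint_{B(0,R)} w\|_{C^{2,\alpha}} \lesssim |t-s|$. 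This is the step I expect to be the main obstacle: one must carefully set up the linearized operator, verify the oblique boundary condition passes to the difference quotient, and invoke the appropriate Schauder theory with oblique boundary data — the computations are routine in spirit but need care with the boundary term and with the fact that only differences (up to constants) are controlled.

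\smallskip

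\textbf{Step 3: Derive the linearized Monge-Amp\`ere equation.}
Given the Lipschitz-in-time $C^{2,\alpha}$ regularity, for a.e.\ $t$ the derivative $\partial_t \nabla P_t^*$ exists in $C^{1,\alpha}$ (or at least in a suitable Sobolev space, and by the uniform bounds one can run the argument for every $t$ after noting weak-$*$ limits of difference quotients). The Monge-Amp\`ere equation $\det \nabla^2 P_t^*(x) = \rho_t(x)$ (the source density against target $\L_\Omega$, i.e.\ $\det\nabla^2 P_t^* = \rho_t$ since $\L_\Omega$ is essentially constant; more precisely $\det\nabla^2P_t^*(x) = \rho_t(x)/\sigma(\nabla P_t^*(x))$) holds pointwise. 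Differentiating in $t$ and using Jacobi's formula $\partial_t \det\nabla^2 P_t^* = \det\nabla^2 P_t^* \,\mathrm{tr}\big((\nabla^2 P_t^*)^{-1}\partial_t\nabla^2 P_t^*\big)$, one obtains, after dividing by $\det\nabla^2 P_t^* = \rho_t$ (up to the target density factor), that $\mathrm{tr}\big((\nabla^2P_t^*)^{-1}\partial_t\nabla^2 P_t^*\big) = \partial_t \log\rho_t$, which can be rewritten in divergence form. Combining this with the continuity equation $\partial_t\rho_t = -\nabla\cdot(U_t\rho_t)$ and manipulating (using that $(\nabla^2 P_t^*)^{-1}$ times the cofactor matrix relations make the divergence structure exact) yields precisely
\[
\nabla\cdot\big(\rho_t(\nabla^2 P_t^*)^{-1}\partial_t\nabla P_t^*\big) = -\nabla\cdot(\rho_t U_t) \quad\text{in }B(0,R).
\]
Finally, the boundary condition $\rho_t(\nabla^2 P_t^*)^{-1}\partial_t\nabla P_t^*\cdot\nu = 0$ on $\partial B(0,R)$ follows by differentiating in $t$ the identity $\nabla P_t^*(\partial B(0,R)) = \partial\Omega$ — more precisely the constraint that $\nabla P_t^*$ maps $\partial B(0,R)$ onto the fixed set $\partial\Omega$ forces $\partial_t\nabla P_t^*$ to be tangent to $\partial\Omega$ at image points, and pairing against the conormal (using the oblique-derivative nondegeneracy $\nu_\Omega(\nabla P_t^*)\cdot\nu_{B(0,R)} \ge \kappa$ from Step 1) gives the stated Neumann-type condition. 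One should double-check signs and the precise role of the target density $\sigma$, but the structure is the standard linearization used in \cite{acdf, Loe1}.
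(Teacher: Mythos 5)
Your proposal is correct and follows essentially the same route as the paper's proof: invoke Theorem~\ref{thm:transport reg}(iii) (with $\Omega_0=B(0,R)$, $\Omega_1=\Omega$) to obtain uniform $C^{2,\alpha}(\overline{B(0,R)})$ bounds, uniform ellipticity $c_1\,Id\le\nabla^2P_t^*\le c_2\,Id$, and the oblique nondegeneracy; linearize the Monge--Amp\`ere equation $\det\nabla^2P_t^*=\rho_t$ and the boundary constraint $h(\nabla P_t^*)=0$ (with $h$ a $C^{2,1}$ defining function of $\Omega$) via difference quotients; apply the Schauder theory for oblique derivative problems to get the Lipschitz-in-$t$ bound on $P_t^*-\fint P_t^*$ in $C^{2,\alpha}$; and pass $s\to t$ using Jacobi's formula together with the divergence-free property of the cofactor matrix and the continuity equation to obtain \eqref{ts:MAlin}.

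One small imprecision worth flagging: in your derivation of the boundary condition you invoke the oblique nondegeneracy $\nu_\Omega(\nabla P_t^*)\cdot\nu_{B(0,R)}\ge\kappa$, but this is not the ingredient that produces $\rho_t(\nabla^2P_t^*)^{-1}\partial_t\nabla P_t^*\cdot\nu=0$. What you actually need, on top of the tangency relation $\nu_\Omega(\nabla P_t^*)\cdot\partial_t\nabla P_t^*=0$ obtained by differentiating $h\circ\nabla P_t^*\equiv0$ on $\partial B(0,R)$ in time, is the collinearity $(\nabla^2P_t^*)^{-1}\nu_{B(0,R)}\parallel\nu_\Omega(\nabla P_t^*)$; this follows because $B(0,R)=\{h\circ\nabla P_t^*<0\}$, so $\nu_{B(0,R)}$ is proportional to $\nabla[h\circ\nabla P_t^*]=\nabla^2P_t^*\,\nabla h(\nabla P_t^*)$. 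The $\ge\kappa$ bound is instead what is needed to verify the hypotheses of the oblique-derivative Schauder estimate in your Step~2 (the lower bound passes to the convex combination $\int_0^1\nabla h(\tau\nabla P_s^*+(1-\tau)\nabla P_t^*)\,d\tau$ for $|s-t|$ small, which is the coefficient of the linearized boundary operator). With that correction your plan matches the paper's proof.
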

\begin{proof}
Observe that because $\rho_t$ solves a continuity equation
with a smooth compactly supported vector field, $\int_{B(0,R)}\rho_t\,dx=1$ for all $t$.

Let us fix $T>0$.
From the regularity theory for the Monge-Amp\'ere equation (Theorem~\ref{thm:transport reg} applied to $P_t$ and $P_t^*$) we obtain that $P_t\in
C^{3,\alpha}(\Omega) \cap C^{2,\alpha}(\overline{\Omega})$ and  $P_t^*\in
C^{3,\alpha}(B(0,R)) \cap C^{2,\alpha}(\overline{B(0,R)})$ for every $\alpha\in (0,1)$, uniformly for
$t \in [0,T]$, and there exist constants
$c_1,\,c_2>0$ such that
\begin{equation}
c_1 Id \leq \nabla^2 P_t^*(x) \leq c_2 Id\qquad\forall\, (x,t)\in B(0,R)\times[0,T].
\label{est:d2}
\end{equation}
Let $h\in C^{2,1}(\R^3)$ be a convex function such that $\Omega = \{y: h(y)< 0\}$ and $|\nabla h (y)| = 1$ on 
$\partial \Omega$, so that $\nabla h(y)=\nu_\Omega(y)$.
Since $\nabla P_t^*\in C^{1,\alpha}(\overline{B(0,R)})$, it is a diffeomorphism onto its image, we have
\begin{equation}
\label{eqn:dir-bound}
h(\nabla P_t^*(x)) = 0 \qquad \forall\, (x,t) \in \partial B(0,R) \times [0,T].
\end{equation}

To prove (i) we need to investigate the time regularity of $P_t^*- \negint_{B(0,R)} P_t^*$.

Possibly adding a time dependent constant to $P_t$, we can assume
without loss of generality that $\int_{B(0,R)} P_t^*=0$ for all $t$.
By the condition $(\nabla P_t^*)_\sharp \rho_t=\L_{\Omega}$ we get that for
any $0\le s,t\le T$ and $x\in B(0,R)$ it holds
\begin{equation} \label{eqn:rho-dif}
\begin{split}
\frac{ \rho_s(x) - \rho_t(x)}{s-t} &= \frac{ \det( \nabla^2P_s^*(x)) -\det( \nabla^2P_t^*(x))}{s-t}\\
&= \sum_{i,j=1}^3 \biggl(\int_0^1 \frac {\partial\det}{\partial \xi_{ij}} (
\tau \nabla^2  P_s^*(x)+ (1-\tau) \nabla^2 P_t^*(x)) \, d\tau\biggr) \,
\frac{ \partial _{ij} P_s^*(x) -\partial _{ij}P_t^*(x)}{s-t}.
\end{split}
\end{equation}
Moreover, from \eqref{eqn:dir-bound} we obtain that on $\partial B(0,R)$ 
\begin{equation} \label{eqn:bound-dif}
\begin{split}
 0 &= \frac{ h(\nabla P_s^*(x)) - h(\nabla P_t^*(x))}{s-t}\\
 & = \int_0^1 \nabla h (
\tau \nabla P_s^*(x)+ (1-\tau) \nabla P_t^*(x)) \, d\tau \,\cdot
\frac{ \nabla P_s^*(x) - \nabla P_t^*(x)}{s-t}.
\end{split}
\end{equation}
Now, given a matrix $A=(\xi_{ij})$, we denote by
$M(A)$ the cofactor matrix of $A$. We recall that
\begin{equation}\label{eqn:det-deriv}
 \frac {\partial\det (A)}{\partial \xi_{ij}}  = M_{ij}(A),
\end{equation}
and if $A$ is invertible then $M(A)$
satisfies the identity
\begin{equation}
 M(A)= \det(A) \, A^{-1}.
\label{eqn:cof}
\end{equation}
Moreover, if $A$ is symmetric and satisfies $c_1 Id \leq A\leq c_2
Id$ for some positive constants $c_1,\,c_2$, then
\begin{equation}\label{eqn:cofactor elliptic}
 c_1^2 Id\leq M(A) \leq  c_2^2 Id.
\end{equation}
Hence, from \eqref{eqn:rho-dif}, \eqref{eqn:det-deriv}, \eqref{est:d2} and \eqref{eqn:cofactor
elliptic} it follows that
\begin{equation}\label{eqn:dg-reg}
\frac{ \rho_s - \rho_t}{s-t} = \sum_{i,j=1}^3 \biggl(\int_0^1M_{ij}( \tau
\nabla^2  P_s^*+ (1-\tau) \nabla^2 P_t^*) \, d\tau\biggr) \, \partial _{ij}\biggl(\frac{
P_s^* -P_t^*}{s-t}\biggr),
 \end{equation}
with
\[
c_1^2 Id \le \int_0^1 M_{ij} ( \tau \nabla^2 P_s^*+ (1-\tau)
\nabla^2 P_t^*) \, d\tau\le c_2^2 Id.
\]
Also, from Theorem \ref{thm:transport reg}(iii) the oblique derivative condition holds, namely there exists $\kappa>0$ such that
$$ \nabla h (\nabla P_t^*(x)) \cdot \nu_{B(0,R)}(x) \geq \kappa \qquad \forall \,x\in \partial B(0,R).$$
Thus, since 
$$ \lim_{s\to t} \int_0^1 \nabla h ( \tau \nabla P_s^*(x)+ (1-\tau) \nabla P_t^*(x)) \, d\tau = \nabla h (\nabla P_t^*(x)) $$
uniformly in $t$ and $x$, we have that
$$ \int_0^1 \nabla h ( \tau \nabla P_s^*(x)+ (1-\tau) \nabla P_t^*(x)) \, d\tau \cdot \nu_{B(0,R)}(x) \geq \frac{\kappa}{2}$$
for $s-t$ small enough.

Hence, from the regularity theory for the oblique derivative problem \cite[Theorem 6.30]{GT} we obtain that for any $\alpha \in (0,1)$ there exists a constant $C$ depending only on $\Omega$, $T$, $\alpha$, 
$\| (\rho_s - \rho_t) /(s-t) \|_{C^{0,\alpha}(B(0,R))}$, such that
\begin{equation*}
  \left\|\frac{ P_s^*(x) -P_t^*(x)}{s-t}\right\|_{C^{2,\alpha}(\overline{B(0,R)})}\leq C.
  \label{eqn:pot-est}
\end{equation*}
Since $\partial_t \rho_t \in L^\infty([0,T], C^{0,\alpha}(B(0,R)))$, this proves point (i) in the statement. To prove the second part, we let $s \to t$ in
\eqref{eqn:dg-reg} to obtain
\begin{equation} \label{eqn:non-var}
\partial_t \rho_t= \sum_{i,j=1}^{3}M_{ij}( \nabla^2 P_t^*(x)) \,  \partial_t \partial_{ij} P_t^*(x).
\end{equation}
Taking into account the continuity equation and the well-known divergence-free property of the cofactor matrix
\[
\sum_{i=1}^3 \partial_i M_{ij}( \nabla^2 P_t^*(x)) = 0,\qquad j=1,2,3,
\]
we can rewrite \eqref{eqn:non-var}  as
\[
-\nabla \cdot(U_t \rho_t) = \sum_{i,j=1}^{3}\partial_i\bigl(M_{ij}(
\nabla^2 P_t^*(x)) \,  \partial_t \partial_{j} P_t^*(x)\bigr).
\]
Hence, using \eqref{eqn:cof} and the Monge-Amp\'ere equation $\det(\nabla^2 P_t^*)=\rho_t$,
we get equation \eqref{ts:MAlin}.

In order to obtain the boundary condition in  \eqref{ts:MAlin}, we take to the limit as $s\to t$ in \eqref{eqn:bound-dif} to get
\begin{equation}\label{eqn:norm-lim}
\nabla h (\nabla P_t^*(x)) \cdot \partial_t \nabla P_t^*(x)=0.
\end{equation}
Since $h$ satisfies $\Omega = \{y: h(y)< 0\}$ and $\nabla P_t^*$ maps $B(0,R)$ in $\Omega$, we have that $B(0,R) = \{y: h(\nabla P_t^*(y))< 0\}$. Hence $\nu_{B(0,R)}(x)$ is proportional to $\nabla [h \circ \nabla P_t^*](x) = \nabla^2 P_t^*(x) \nabla h(\nabla P_t^*(x))$, which implies that the exterior normal to $\Omega$ at point $\nabla P_t^*(x)$, which is $\nabla h (\nabla P_t^*(x))$,
 is collinear with $\rho_t (\nabla^2P_t^*)^{-1} \nu_{B(0,R)}$. Hence from \eqref{eqn:norm-lim} it follows that
 $$ \rho_t (\nabla^2P_t^*)^{-1}\nu_{B(0,R)} \cdot \partial_t \nabla P_t^*= 0,$$
as desired.
\end{proof}

\begin{lemma}[Decay estimates on $\rho_t$]\label{lemma:dec-est}
Let $v_t: \R^3 \times [0,\infty) \to \R^3$ be a $C^\infty$ velocity field and suppose that 
$$\sup_{x,t}|\nabla \cdot v_t(x)| \leq N,\qquad
 |v_t(x)| \leq A |x| +D \quad \forall\, (x,t)\in \R^3\times [0,\infty)$$
 for suitable constants $N,\,A, \,D$.
Let $\rho_0$ be a probability density, and let $\rho_t$ be the solution of the continuity equation
 \begin{equation}\label{eqn:cont}
 \partial_t \rho_t+\nabla\cdot (v_t\rho_t)=0\qquad\text{in $\R^3\times (0,\infty)$}
 \end{equation}
starting from $\rho_0$. Then:
\begin{itemize}
\item[(i)] For every $r>0$ and $t\in[0,\infty)$ it holds
\begin{equation}\label{hp:bound-above}
\|\rho_t\|_\infty\leq e^{Nt}\|\rho_0\|_\infty,
\end{equation}
\begin{equation}\label{hp:bound-below}
\rho_t(x) \ge e^{-Nt}\inf\Big\{\rho_0(y):\ y\in B\Big(0,re^{At}+D \frac{e^{At}-1}{A}\Big)\Big\}  \qquad \forall \,x \in B(0,r).
\end{equation}
\item[(ii)] Let us assume that there exist $d_0\in [0,\infty)$ and $M\in [0,\infty)$ such that
\begin{equation}\label{hp:growth-c}
\rho_0(x) \leq \frac{d_0}{|x|^K} \qquad \mbox{whenever} \quad |x|\geq M.
\end{equation}
Then for every $t\in[0,\infty)$ we have that 
\begin{equation}\label{ts:growth-c}\rho_t(x) \leq\frac{d_0 2^{K} e^{(N+AK)t}}{|x|^K} \qquad \text{whenever}\quad|x|\geq 2Me^{At}+2D \frac{e^{At}-1}{A}.\end{equation}
\item[(iii)]\label{st:dec-1} 
Let us assume that there  exists $R>0$ such that
$\rho_0$ is smooth in $\overline{B(0,R)}$, vanishes outside $\overline{B(0,R)}$, and that
$v_t$ is compactly supported inside $B(0,R)$ for all $t\geq 0$.
Then $\rho_t$ is smooth inside $\overline{B(0,R)}$ and vanishes outside $\overline{B(0,R)}$ for
all $t\geq 0$. Moreover
if $0<\lambda\le\rho_0\le\Lambda<\infty$ inside $B(0,R)$, then
\begin{equation}\label{ts:supp-rho}
\lambda e^{-tN}\leq \rho_t\leq\Lambda  e^{tN}\quad\text{inside $B(0,R)$ for all  $t\geq 0$.}
\end{equation}
\end{itemize}
\end{lemma}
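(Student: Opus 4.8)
The plan is to derive all three statements from the method of characteristics for the continuity equation \eqref{eqn:cont}. Since $v_t$ is $C^\infty$ and $|v_t(x)|\le A|x|+D$, for each $y\in\R^3$ the ODE $\dot X_s=v_s(X_s)$, $X_0=y$, has a unique solution $s\mapsto X_{0,s}(y)$ defined for all $s\ge 0$, and $y\mapsto X_{0,t}(y)$ is a $C^\infty$ diffeomorphism of $\R^3$. By uniqueness of distributional solutions of \eqref{eqn:cont} with smooth coefficients of linear growth, $\rho_t\L^3=(X_{0,t})_\sharp(\rho_0\L^3)$, and combining the Liouville identity $\frac{d}{ds}\det\nabla X_{0,s}(y)=(\nabla\cdot v_s)(X_{0,s}(y))\det\nabla X_{0,s}(y)$ with the change of variables formula one records the pointwise relation $\rho_t(X_{0,t}(y))=\rho_0(y)\exp\!\big(-\int_0^t(\nabla\cdot v_s)(X_{0,s}(y))\,ds\big)$, hence, using $|\nabla\cdot v_s|\le N$, the two-sided bound $e^{-Nt}\rho_0(y)\le\rho_t(X_{0,t}(y))\le e^{Nt}\rho_0(y)$. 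The second ingredient is a Gr\"onwall estimate along trajectories: $g(s):=|X_{0,s}(y)|$ satisfies $|g'(s)|\le Ag(s)+D$, so $s\mapsto e^{As}(g(s)+D/A)$ is nondecreasing and $s\mapsto e^{-As}(g(s)+D/A)$ is nonincreasing; evaluating at $s=0$ and $s=t$ gives $|X_{0,t}(y)|e^{-At}-D\frac{1-e^{-At}}{A}\le|y|\le|X_{0,t}(y)|e^{At}+D\frac{e^{At}-1}{A}$.

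With these two facts, parts (i) and (ii) are routine. For (i), the bound \eqref{hp:bound-above} is immediate from the two-sided density bound and surjectivity of $X_{0,t}$; for \eqref{hp:bound-below}, given $x\in B(0,r)$ I set $y:=X_{0,t}^{-1}(x)$, use the upper Gr\"onwall bound to get $|y|\le re^{At}+D\frac{e^{At}-1}{A}$, and the lower density bound to get $\rho_t(x)\ge e^{-Nt}\rho_0(y)$, which together give the claim. For (ii), again with $y:=X_{0,t}^{-1}(x)$, the lower Gr\"onwall bound together with the hypothesis $|x|\ge 2Me^{At}+2D\frac{e^{At}-1}{A}$ yields, by an elementary computation, both $|y|\ge M$ (so that \eqref{hp:growth-c} applies at $y$) and $|y|\ge\tfrac12|x|e^{-At}$; then $\rho_t(x)\le e^{Nt}\rho_0(y)\le e^{Nt}d_0/|y|^K\le d_0\,2^K e^{(N+AK)t}/|x|^K$, which is \eqref{ts:growth-c}.

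For (iii), since $v_t$ is supported in a fixed compact subset of $B(0,R)$, every trajectory starting in $\R^3\setminus\overline{B(0,R)}$ or on $\partial B(0,R)$ is constant, so by uniqueness of the ODE no trajectory crosses $\partial B(0,R)$; hence $X_{0,t}$ restricts to a diffeomorphism of $B(0,R)$ onto itself and equals the identity outside $\overline{B(0,R)}$. Consequently $\supp\rho_t=X_{0,t}(\supp\rho_0)\subseteq\overline{B(0,R)}$, and on $\overline{B(0,R)}$ the density $\rho_t=(\rho_0\circ X_{0,t}^{-1})\,|\det\nabla X_{0,t}^{-1}|$ is $C^\infty$ up to the boundary; finally, for $x\in B(0,R)$ one has $y:=X_{0,t}^{-1}(x)\in B(0,R)$, so $\lambda\le\rho_0(y)\le\Lambda$, and the two-sided density bound gives \eqref{ts:supp-rho}.

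I do not expect a serious obstacle: the argument is the classical transport-of-density computation plus Gr\"onwall. The two points that need a little care are (a) the justification that $\rho_t$ is \emph{exactly} the push-forward $(X_{0,t})_\sharp(\rho_0\L^3)$, i.e.\ that \eqref{eqn:cont} is well posed in the relevant class, which is standard for smooth vector fields of linear growth, and (b) the bookkeeping involved in reading the Gr\"onwall inequality backward in time and in verifying that the thresholds appearing in (ii) are precisely the ones for which the elementary estimates $|y|\ge M$ and $|y|\ge\frac12|x|e^{-At}$ hold.
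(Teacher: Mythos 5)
Your proposal is correct and follows essentially the same route as the paper: write $\rho_t$ as the push-forward $(X_{0,t})_\sharp(\rho_0\L^3)$ via the flow, deduce the two-sided pointwise bound $e^{-Nt}\rho_0(X_{0,t}^{-1}(x))\le\rho_t(x)\le e^{Nt}\rho_0(X_{0,t}^{-1}(x))$ from the divergence bound, and combine it with the Gr\"onwall estimates $|x|e^{-At}-D\tfrac{1-e^{-At}}{A}\le|X_{0,t}^{-1}(x)|$ and $|x|\le|X_{0,t}^{-1}(x)|e^{At}+D\tfrac{e^{At}-1}{A}$ to obtain (i) and (ii), while (iii) follows from the invariance of $B(0,R)$ and its complement under the flow when $v_t$ is compactly supported in $B(0,R)$. (You even caught the sign in the Jacobian exponent more carefully than the paper's equation (3.13), though the bound $|\nabla\cdot v_s|\le N$ makes the sign immaterial for the estimates.)
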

\begin{proof}
Let $X_t(x)\in C^\infty(\R^3\times [0,\infty))$ be the flow associated to the velocity field $v_t$, namely the solution to
\begin{equation}\label{eqn:vt-flow}
\begin{cases}
\frac{d}{dt} X_t(x) = v_t(X_t(x)) \\
X_0(x) = x.
\end{cases}
\end{equation}
For every $t\geq 0$ the map $t\mapsto X_t(x)$ is invertible in $\R^3$, with inverse denoted by $X_t^{-1}$.

The solution to the continuity equation \eqref{eqn:cont} is given by $\rho_t = {X_t}_\sharp \rho_0$,
and from the well-known theory of characteristics it can be written explicitly using the flow:
\begin{equation}\label{eqn:rhot-exp}\rho_t(x)= \rho_0(X_t^{-1}(x)) e^{\int_0^t \nabla \cdot v_s(X_{ s}(X_t^{-1}(x))) \, ds} \qquad \forall\, (x,t) \in \R^3 \times [0,\infty).
\end{equation}

Since the divergence is bounded, we therefore obtain
\begin{equation}\label{eqn:rhot-espl}
\rho_0(X_t^{-1}(x)) e^{-Nt} \leq \rho_t(x) \leq \rho_0(X_t^{-1}(x)) e^{Nt}
\end{equation}
Now we deduce the statements of the lemma from the properties of the flow $X_t$.

(i) From \eqref{eqn:rhot-espl} we have that 
$$ \rho_t(x) \leq  e^{Nt} \rho_0(X_t^{-1}(x)) \leq  e^{Nt} \sup_{x\in \R^3} \rho_0(x) ,$$
which proves \eqref{hp:bound-above}. From the equation \eqref{eqn:vt-flow} we obtain 
$$\Bigl|\frac{d}{dt} |X_t(x)|\Bigr| \leq |\partial_t X_t(x)| \leq  A| X_t(x)|+D$$
which can be rewritten as
\begin{equation}\label{eq:timeder}
- A  |X_t(x)| - D \leq \frac{d}{dt} |X_t(x)| \leq A  |X_t(x)| +D.
\end{equation}
From the first inequality we get
$$|X_t(x)| \geq |x|e^{-At} - D \frac{1-e^{-At}}{A},$$
which implies
$$|x|e^{At}+D \frac{e^{At}-1}{A}\geq  |X_t^{-1}(x)|,$$
or equivalently
\begin{equation}\label{est:x-1}
X_t^{-1}\bigl(\{|x|\leq r\}\bigr) \subseteq \Bigl\{ |x|\leq re^{At}+D \frac{e^{At}-1}{A}\Bigr\}.
\end{equation}
Hence from \eqref{eqn:rhot-espl} and \eqref{est:x-1} we obtain that, for every $x\in B(0,r)$,
\begin{eqnarray*}
\rho_t(x) &\geq&  e^{-Nt} \rho_0(X_t^{-1}(x))\\
&\geq& e^{-Nt} \inf\{ \rho_0(y) : y\in {X_t^{-1}(B_r(0))}\}\\
&\geq& e^{-Nt}  \inf\Bigl\{ \rho_0(y) : { |y|\leq re^{At}+D \frac{e^{At}-1}{A}}\Bigr\},
\end{eqnarray*}
which proves \eqref{hp:bound-below}.

(ii) From the second inequality in \eqref{eq:timeder}, we infer 
$$|X_t(x)| \leq |x|e^{At}+D \frac{e^{At}-1}{A},$$
which implies
\begin{equation}\label{est:x2}
|x|\leq  |X_t^{-1}(x)|e^{At}+D \frac{e^{At}-1}{A}.
\end{equation}
Thus, if $|x|\geq 2Me^{At}+2D \frac{e^{At}-1}{A}$, we easily deduce
from \eqref{est:x2} that $|X_t^{-1}(x)|\geq M+|x|e^{-At}/2$,
so by \eqref{hp:growth-c}
$$ \rho_t(x)\le  e^{Nt} \rho_0(X_t^{-1}(x)) \leq \frac{d_0 e^{Nt}}{|X_t^{-1}(x)|^K} \leq
\frac{d_0 2^{K} e^{(N+AK)t}}{|x|^K}, $$
which proves \eqref{ts:growth-c}.

(iii) { If $v_t = 0$ in a neighborhood of $\partial B(0, R)$ it can be easily verified that the flow maps $X_t:\R^3 \to \R^3$ leave both $B(0,R)$ and its complement invariant. Moreover
the smoothness of $v_t$ implies that also $X_t$ is smooth.
Therefore all the properties of $\rho_t$ follow directly from \eqref{eqn:rhot-exp}. }
\end{proof}

We are now ready to prove the regularity of $\nabla P_t^*$.

\begin{proposition}[Time regularity of optimal maps]
\label{prop:est} Let $\Omega\subseteq \R^3$ be a bounded, convex, open set and let $\diam$ be such that $ \overline{\Omega} \subset B(0,\diam)$. Let $\rho_t$ and $P_t$ be as in Theorem~\ref{thm:dualeq}, in addition let us assume that there exist $K>4$, $M\geq 0$ and $c_0>0$ such that
\begin{equation}\label{hp:dec}
\rho_0(x) \leq \frac{c_0}{|x|^K} \qquad \text{whenever $|x|\geq M$.}
\end{equation}
Then
$\nabla P_t^*\in W^{1,1}_{\rm loc}(\R^3\times [0,\infty);\R^3)$. Moreover  for
every $k\in\n$ and $T>0$ there exists a constant $C = C(k, T, M, c_0, \| \rho_0\|_\infty, \diam)$ such that, for almost every $t \in [0,T]$ it holds
\begin{equation}
\label{ts:loep}
\int_{B(0, r)} \rho_t |\partial_t \nabla P_t^*| \log^k_+ (|\partial_t \nabla P_t^*|) \, dx
\leq 2^{3(k-1)} \int_{B(0, r)}\rho_t |\nabla^2 P_t^*|
\log^{2k}_+(|\nabla^2 P_t^*|) \, dx+ C\qquad\forall \,r>0.
\end{equation}
\end{proposition}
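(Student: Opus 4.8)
The plan is to reduce, by approximation, to the smooth framework of Lemmas~\ref{lemma:MAlin} and~\ref{lemma:dec-est}(iii), prove \eqref{ts:loep} there with constants independent of the approximation, and pass to the limit by lower semicontinuity. So first I would approximate $\Omega$ from inside by uniformly convex domains $\Omega_n$ with $\partial\Omega_n\in C^{2,1}$, and $\rho_0$ by smooth densities $\rho_0^n$ which are bounded above and below on a ball $B(0,R_n)$ ($R_n\to\infty$), vanish outside it, and still satisfy $\rho_0^n(x)\le c_0'|x|^{-K}$ for $|x|\ge M$ with $c_0'$ independent of $n$. Running the dual problem with a velocity obtained from $U_t=J(x-\nabla P_t^{*})$ by smoothly interpolating the argument $x$ towards $\nabla P_t^{*}(x)$ outside a large ball — so that the resulting field is compactly supported in $B(0,R_n)$ and has divergence and linear-growth bounds \emph{uniform in $n$} (the divergence being $J:\nabla(\text{interpolation})$, which stays bounded) — Lemma~\ref{lemma:dec-est}(iii) yields solutions $\rho_t^n$ smooth and supported in $\overline{B(0,R_n)}$, with two-sided bounds on each time interval $[0,T]$; and by Lemma~\ref{lemma:MAlin} the function $\phi_t^n:=\partial_t P_t^{*,n}$ (normalized as there) is $C^{2,\alpha}$ up to $\partial B(0,R_n)$ and solves the linearized Monge--Amp\`ere equation \eqref{ts:MAlin}. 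Since $\rho_t^n\to\rho_t$ and, by stability of optimal maps together with Remark~\ref{rmk:set-conv}, $\nabla P_t^{*,n}\to\nabla P_t^{*}$ locally (hence $\partial_t\nabla P_t^{*,n}\to\partial_t\nabla P_t^{*}$ in the distributional sense), it suffices to prove \eqref{ts:loep} for $(\rho_t^n,\phi_t^n)$ with $C$ independent of $n$: the left-hand side is weakly lower semicontinuous because the integrand is convex in $\partial_t\nabla P_t^*$, and the right-hand side is controlled uniformly in $n$ via Theorem~\ref{thm:transport reg}(ii) and Remark~\ref{rmk:set-conv}. From here on I drop the superscript $n$.

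\textbf{The energy estimate and its absolute bound.} Next I would test \eqref{ts:MAlin} with $\phi_t$: since the velocity vanishes near $\partial B(0,R)$ and by the Neumann condition, the boundary terms drop and one gets
\[
\int_{B(0,R)}\rho_t\,(\nabla^2P_t^*)^{-1}\nabla\phi_t\cdot\nabla\phi_t=-\int_{B(0,R)}\rho_t\,U_t\cdot\nabla\phi_t .
\]
Using $(\nabla^2P_t^*)^{-1}\ge |\nabla^2P_t^*|^{-1}\,\mathrm{Id}$ on the left and Cauchy--Schwarz on the right, this gives
\[
\int_{B(0,R)}\rho_t\,\frac{|\nabla\phi_t|^2}{|\nabla^2P_t^*|}\ \le\ \int_{B(0,R)}\rho_t\,|U_t|^2\,|\nabla^2P_t^*| .
\]
The point of the decay hypothesis is that the right-hand side is bounded by a constant depending only on $k,T,M,c_0,\|\rho_0\|_\infty,\diam$. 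Indeed, on a fixed ball $B(0,L)$ (with $L$ depending only on $T,M$ and the growth constants) one has $|U_t|\le L+\diam$ and $\int_{B(0,L)}\rho_t|\nabla^2P_t^*|\le C(L,\dots)$ by Theorem~\ref{thm:transport reg}(ii), uniformly in $n$; outside, one combines the pointwise decay $\rho_t(x)\le c\,|x|^{-K}$ from Lemma~\ref{lemma:dec-est}(ii), the bound $|U_t(x)|\le |x|+\diam$, and the fact that $\nabla P_t^*$ maps into the bounded set $\overline\Omega$, which forces $|\nabla^2P_t^*(x)|$ to decay like $|x|^{-1}$ at infinity; since $K-1>3$ this makes $\int_{\{|x|\ge L\}}\rho_t|U_t|^2|\nabla^2P_t^*|$ finite with an absolute bound. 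Thus $\int_{B(0,R)}\rho_t\,|\nabla\phi_t|^2/|\nabla^2P_t^*|\le C_0$.

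\textbf{The $L\log^kL$ upgrade and conclusion.} Writing $w:=|\nabla\phi_t|$, $v:=|\nabla^2P_t^*|$, I would establish the pointwise inequality $w\log^k_+w\le C_k\,w^2/v+2^{3(k-1)}\,v\log^{2k}_+v$ by distinguishing two cases: if $w\le v^2$ then $\log^k_+w\le 2^k\log^k_+v$, and Young's inequality applied to $w\log^k_+v=(w/v^{1/2})(v^{1/2}\log^k_+v)$ produces the two terms above; if $w>v^2$ then $v\le w^{1/2}$ and $\log^k_+w\lesssim w^{1/2}$, so $w\log^k_+w\lesssim w^{3/2}\le w^2/v$. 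Multiplying by $\rho_t$, integrating over $B(0,r)$, and absorbing the $\rho_t\,w^2/v$ term into $C$ via the previous paragraph gives \eqref{ts:loep} (a careful accounting of the elementary constants is needed to reach the stated factor $2^{3(k-1)}$; lower-order terms are absorbed using $s\le s\log^{2k}_+s+C$). Passing to the limit as in the first paragraph yields \eqref{ts:loep} for the genuine solution. Finally, integrating \eqref{ts:loep} in $t$ over $[0,T]$ and using $s\le s\log^k_+s+C$ gives $\int_0^T\!\!\int_{B(0,r)}\rho_t|\partial_t\nabla P_t^*|<\infty$; since $\rho_t\ge\lambda(r)>0$ on $B(0,r)$ by \eqref{hp:bound-below} (recall $1/\rho_0\in L^\infty_{\rm loc}$), we get $\partial_t\nabla P_t^*\in L^1_{\rm loc}(\R^3\times[0,\infty))$, and combined with $\nabla_x(\nabla P_t^*)=\nabla^2P_t^*\in L^1_{\rm loc}$ uniformly in $t$ (Theorem~\ref{thm:transport reg}(ii)) this gives $\nabla P_t^*\in W^{1,1}_{\rm loc}(\R^3\times[0,\infty);\R^3)$.

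\textbf{Main obstacle.} The crux is the absolute bound on $\int\rho_t|U_t|^2|\nabla^2P_t^*|$ in the second paragraph: this is exactly where the exponent $K>4$ is used, and it rests on quantifying the (slow) degeneration of the Hessian of the optimal map at spatial infinity, which is not entirely routine. Secondarily, the pointwise manipulation in the third paragraph is elementary but the bookkeeping required to reach the sharp constant $2^{3(k-1)}$ is delicate. The approximation/limiting argument itself is standard, but relies essentially on the uniform-in-$n$ estimates of Theorem~\ref{thm:transport reg}(ii) (Remark~\ref{rmk:set-conv}) and on choosing the velocity truncation so that its divergence stays uniformly bounded.
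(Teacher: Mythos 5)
Your high-level structure (linearized Monge--Amp\`ere energy estimate in a smoothed, truncated setting, followed by a pointwise $a\,b\log^k_+\!$-type inequality and a lower-semicontinuity limit) matches the paper's; the approximation scheme differs only in cosmetic details, and your two-case pointwise inequality is an acceptable substitute for the numerical lemma the paper imports from \cite{acdf}. The genuine gap is in the justification of the absolute bound on the right-hand side of the energy estimate. You claim that because $\nabla P_t^*$ maps $\R^3$ into the bounded set $\overline{\Omega}$, the Hessian ``decays like $|x|^{-1}$ at infinity.'' This pointwise statement is simply false: boundedness of a monotone gradient gives control on \emph{integrals} of the Hessian (by the divergence theorem, $\int_{B_r}\Delta P_t^*\le 4\pi r^2\diam$), not on its pointwise values, and $|\nabla^2 P_t^*(x)|$ can in fact be arbitrarily large on thin sets tending to infinity. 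The correct mechanism---and the one used in the paper---is to exploit precisely this integrated bound. In the paper this is packaged as a split $\rho_t = \rho_t^{1/2}\cdot\rho_t^{1/2}$, bounding $\sup(\rho_t^{1/2}|U_t|^2)$ (which needs $K\ge4$) and $\int\rho_t^{1/2}|\nabla^2 P_t^*|$ by a Cavalieri/layer-cake argument: writing $\rho_t^{1/2}\lesssim |x|^{-K/2}$ and using $\int_{B_r}\Delta P_t^*\lesssim r^2$ gives $\int_0^{\epsilon}s^{-4/K}\,ds$, finite iff $K>4$.

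It is worth noting that the threshold $K>4$ you arrive at is coincidentally correct, which makes the error easy to overlook. Had you kept the integrand $\rho_t|U_t|^2\,\Delta P_t^*\lesssim |x|^{-(K-2)}\Delta P_t^*$ and applied the same layer-cake argument directly (without the paper's $\rho^{1/2}$-split), you would also find $\int_0^{\epsilon}s^{-2/(K-2)}\,ds<\infty$ iff $K>4$; so your estimate is salvageable, but the argument must go through the divergence formula and a superlevel-set decomposition, not through any fictitious pointwise decay of $|\nabla^2 P_t^*|$. You should also be slightly more careful in the limiting step: to invoke Ioffe lower semicontinuity on $\rho^n_t\partial_t\nabla P_t^{*,n}$ one needs weak $L^1$ convergence (obtained via Dunford--Pettis from the equi-integrability encoded in the $L\log^k\!L$ bound itself), not merely distributional convergence, and the estimate for a.e.\ $t$ is recovered from the time-integrated version by a localization in $t$.
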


\begin{proof} \emph{Step 1: The smooth case.} In the first part of the proof we assume that $\Omega$ is a convex smooth domain, 
and, besides \eqref{hp:dec}, that for some $R>0$ the following additional properties hold:
\begin{eqnarray}
&&\rho_t\in C^{\infty}(\overline{B(0,R)} \times \R ),\,\,U_t\in C^{\infty}_c(B(0,R) \times \R;\R^3 ),\,\, |\nabla \cdot U_t|\le N
\label{eqn:loep-reg1}\\
&&\lambda 1_{B(0,R)}(x)  \le \rho_0(x)\le
\Lambda 1_{B(0,R)}(x)\qquad\forall \,x\in\R^3\label{eqn:loep-reg2}  ,\\
&&\partial_t
\rho_t+\nabla\cdot (U_t\rho_t)=0 \qquad\text{in $\R^3\times [0,\infty)$}\,,\label{eqn:loep-reg4}\\
&&
(\nabla P^*_t)_\sharp \rho_t=\L_\Omega ,\label{eqn:loep-reg5}\\
&& |U_t(x)| \leq |x|+\diam\label{eqn:loep-reg6}
\end{eqnarray}
for some constants $N, \,\lambda, \,\Lambda$, and we prove that \eqref{ts:loep} holds for every $t\in [0,T]$. 
Notice that in this step we do not assume any coupling between the velocity $U_t$ and the transport map $\nabla P^*_t$.
In the second step we prove the general case through an approximation argument.

Let us assume that the regularity assumptions \eqref{eqn:loep-reg1}
through \eqref{eqn:loep-reg6} hold. By Lemma~\ref{lemma:dec-est} we infer that,
for any $T>0$, there exist positive constants
$\l_T,\Lambda_T, c_T,M_T$, with $M_T \geq 1$,  such that
\begin{eqnarray}
&&\lambda_T 1_{B(0,R)}(x) \le  \rho_t(x)\le
\Lambda_T 1_{B(0,R)}(x)\label{eqn:evloep-reg1}  ,\\
&&\rho_t(x)\le
\frac{c_T}{|x|^K}\quad \text{ for $|x|\ge M_T,\qquad$ for all $ t\in [0,T]$. }\label{eqn:evloep-reg2}
\end{eqnarray}
By Lemma~\ref{lemma:MAlin} we have that $\partial_t P_t^* \in
C^{2}( \overline{B(0,R)})$, and it solves
 \begin{equation}
 \begin{cases}
 \nabla \cdot \bigl(\rho_t (\nabla^2P_t^*)^{-1}\partial_t \nabla P_t^*\bigr)  = - \nabla \cdot(\rho_t U_t)
\quad\quad &\text {in $ B(0,R)$} \\
\rho_t (\nabla^2P_t^*)^{-1}\partial_t \nabla P_t^* \cdot \nu= 0 \quad\quad &\text {in $ \partial B(0,R)$.}
\end{cases}
\label{ts:MAlin1}
\end{equation}
Multiplying \eqref{ts:MAlin1} by $\partial_t P_t^*$ and integrating
by parts, we get
\begin{equation}
\label{eqn:ma-div}
\begin{split}
 \int_{B(0,R)} \rho_t | (\nabla^2 P_t^*)^{-1/2}\partial_t \nabla P_t^*|^2\, dx
&=\int_{B(0,R)}  \rho_t \partial_t \nabla P_t^*\cdot (\nabla^2 P_t^*)^{-1} \partial_t \nabla P_t^* \,dx\\
& = -\int_{B(0,R)}  \rho_t  \partial_t \nabla P_t^* \cdot U_t \, dx.
\end{split}
\end{equation}
(Notice that, thanks to the boundary condition in \eqref{ts:MAlin1}, we do not have any boundary term in \eqref{eqn:ma-div}.)
From Cauchy-Schwartz inequality it follows that the right-hand side of \eqref{eqn:ma-div}
can be rewritten and estimated by
\begin{equation}
\begin{split}\label{est:ma}
&-\int_{B(0,R)}  \rho_t  \partial_t \nabla P_t^*\cdot (\nabla^2 P_t^*)^{-1/2} (\nabla^2 P_t^*)^{1/2} U_t\,dx\\
& \leq \left( \int_{B(0,R)} \rho_t | (\nabla^2 P_t^*)^{-1/2}\partial_t
\nabla P_t^*|^2\, dx\right)^{1/2}  \left( \int_{B(0,R)}  \rho_t
|(\nabla^2 P_t^*)^{1/2}  U_t|^2\, dx \right)^{1/2}.
 \end{split}
\end{equation}
Moreover, the second term in the right-hand side of \eqref{est:ma} is controlled by
\begin{equation}\label{est:v}
\int_{B(0,R)}  \rho_t U_t \cdot \nabla^2 P_t^* U_t\,dx \leq
\max_{B(0,R)}\left( \rho_t^{1/2} |U_t|^2 \right) \int_{B(0,R)} \rho_t^{1/2} |\nabla^2 P_t^*| \, dx.
\end{equation}
Hence from \eqref{eqn:ma-div}, (\ref{est:ma}), and \eqref{est:v} we obtain
\begin{equation}\label{est:glob}
 \int_{B(0,R)} \rho_t | (\nabla^2 P_t^*)^{-1/2}\partial_t \nabla P_t^*|^2\, dx \leq 
 \max_{B(0,R)}\left( \rho_t^{1/2} |U_t|^2 \right) \int_{B(0,R)} \rho_t^{1/2} |\nabla^2 P_t^*| \, dx.
\end{equation}

>From \eqref{eqn:loep-reg6},
\eqref{eqn:evloep-reg1}, and \eqref{eqn:evloep-reg2} we estimate the first factor as follows:
\begin{equation}\label{est:rhov-1}
\max_{|x| \leq M_T}\left( \rho_t^{1/2}(x) |U_t(x)|^2 \right)\leq \Lambda_T^{1/2} (M_T+\diam)^2,
\end{equation}
\begin{equation}\label{est:rhov-2}
\max_{ M_T \leq |x|}\left( \rho_t^{1/2}(x) |U_t(x)|^2 \right)\leq
\max_{ M_T \leq |x|}\Bigg \{ \frac{\sqrt{c_T}}{|x|^{K/2}} (|x|+\diam)^2 \Bigg\},
\end{equation}
and the latter term is finite because $M_T\geq 1$ and $K>4$.

In order to estimate the second factor, we observe that
since $\nabla^2 P_t^*$ is a nonnegative matrix the estimate $|\nabla^2 P_t^*| \leq \Delta P_t^*$ holds
(here we are using the operator 
norm on matrices). Hence, by~\eqref{eqn:evloep-reg1}
and \eqref{eqn:evloep-reg2} we obtain
\begin{equation*}
\begin{split}
\int_{B(0,R)} \rho_t^{1/2} |\nabla^2 P_t^*| \, dx & \leq \int_{\{ |x| \leq M_T\}} \rho_t^{1/2} |\nabla^2 P_t^*| \, dx +\int_{\{ |x| > M_T \}} \rho_t^{1/2} |\nabla^2 P_t^*| \, dx \\
& \leq \int_{\{|x| \leq M_T\}} \Lambda_T^{1/2} \Delta P_t^* \, dx +\int_{\{|x| > M_T\}} \frac{\sqrt{c_T}}{|x|^{K/2}} \Delta P_t^* \, dx.
\end{split}
\end{equation*}

The second integral can be rewritten as
$$
\int_{0}^{\infty} \int_{\{|x| > M_T\} \cap \{ {|x|^{-K/2}}>s\}} \Delta P_t^* \, dx \, ds,
$$
which is bounded by
$$
\int_{0}^{[M_T]^{-K/2}} ds \int_{\{ {|x|\leq s^{-2/K}\}}} \Delta P_t^* \, dx.
$$
From the divergence formula, since $|\nabla P_t^* (x)| \leq \diam$ (because $ \nabla P_t^* (x) \in \Omega$ 
for every $x\in \R^3$) and $M_T \geq 1$ (so $[M_T]^{-K/2} \leq 1$) we obtain 
\begin{equation}\label{est:rhoP-1}
\begin{split}
\int_{B(0,R)} \rho_t^{1/2} |\nabla^2 P_t^*| \, dx & 
\leq \Lambda_T^{1/2} \int_{  \{|x| = M_T\}} |\nabla P_t^*| \,d\mathcal H^{2}+ \sqrt{c_T}\int_{0}^{[M_T]^{-K/2}} ds\int_{ \{ {|x|= s^{-2/K}\}}} |\nabla P_t^*| \,\,d\mathcal H^{2}\\
&\leq 4 \pi \Lambda_T^{1/2} M^2_T \diam +  4 \pi \sqrt{c_T}\diam \int_{0}^{1} s^{-4/K} \, ds
\end{split}
\end{equation}
for all $t \in [0,T]$.
Since  $K>4$ the last integral is finite, so the right-hand side is bounded
and we obtain a global-in-space estimate on the left-hand side.

Thus, from (\ref{est:glob}), \eqref{est:rhov-1},  \eqref{est:rhov-2}, and \eqref{est:rhoP-1}, it follows\
that there exists a constant $C_1 = C_1(T,M, c_0, \Lambda, \diam)$ (notice that the constant does not depend on the lower bound on the density) such that 
\begin{equation}\label{est:l2norms}
 \int_{B(0,R)} \rho_t | (\nabla^2 P_t^*)^{-1/2}\partial_t \nabla P_t^*|^2\, dx
 \leq C_1.
 \end{equation}
Applying now the inequality
\begin{equation}
ab \log ^k_+(ab) \leq 2^{k-1} \left[ \left(\frac{k}{e}\right)^k +1 \right] b^2 + 2^{3(k-1)}a^2 \log^{2k}_+ (a) \quad\quad\forall \,(a,b)\in \re^+ \times \re^+,
\label{eqn:dis-num}
\end{equation}
(see \cite[Lemma 3.4]{acdf}) with $a = |(\nabla^2
P_t^*)^{1/2}|$ and $b=  |(\nabla^2 P_t^*)^{-1/2}\partial_t \nabla
{P_t}^*(x)| $ we deduce the existence of a constant $C_2=C_2(k)$ such that
\begin{equation*} \begin{split}
|\partial_t \nabla P_t^*| \log_+^k(|\partial_t \nabla P_t^*|) &\leq
2^{3(k-1)} |(\nabla^2 P_t^*)^{1/2}|^2 \log^{2k}_+(|(\nabla^2
P_t^*)^{1/2}|^2) +C_2
| (\nabla^2 P_t^*)^{-1/2}\partial_t \nabla P_t^*|^2\\
&=2^{3(k-1)} |\nabla^2 P_t^*|\log^{2k}_+(|\nabla^2 P_t^*|) + C_2 |
(\nabla^2 P_t^*)^{-1/2}\partial_t \nabla P_t^*|^2.
\end{split}
\end{equation*}
Integrating the above inequality over $B(0,r)$ and using \eqref{est:l2norms}, we finally obtain
\begin{equation}\label{eqn: tsloepersmooth}
\begin{split}
&\int_{B(0,r)} \rho_t |\partial_t \nabla P_t^*| \log^k_{+} (|\partial_t \nabla P_t^*|) \, dx\\
&\leq 2^{3(k-1)} \int_{B(0, r) }\rho_t |\nabla^2 P_t^*| \log^{2k}_+(|\nabla^2 P_t^*|) \, dx
+ C_2 \int_{B(0,R)}\rho_t | (\nabla^2 P_t^*)^{-1/2}\partial_t \nabla P_t^*|^2\, dx\\
&\leq 2^{3(k-1)} \int_{B(0,r)}\rho_t |\nabla^2 P_t^*|
\log^{2k}_+(|\nabla^2 P_t^*|) \, dx+ C_1 \cdot C_2,
\end{split}
\end{equation}
for all $0<r\leq R$.

\medskip
\emph{Step 2: The approximation argument.}
{We now consider the velocity field $U$ given by Theorem~\ref{thm:dualeq},}
we take a sequence of smooth  convex domains $\Omega_n$ which converges to $\Omega$ in the Hausdorff distance,
and a sequence $(\psi^n)\subset C_c^{\infty}(B(0,n))$ of cut off functions such that $0\leq\psi_n\leq 1$, $\psi^n(x) = 1$ inside $B(0,n/2)$, 
$|\nabla \psi^n| \leq 2/n$ in $\re^3$. Let us also consider a sequence of space-time mollifiers $\sigma^n$ with support
contained in $B(0,1/n)$ and a sequence 
of space mollifiers $\varphi^n$. We extend the function $U_t$ for $t\leq 0$ by setting $U_t=0$ for every $t<0$.

Let us consider a compactly supported space regularization of $\rho_0$ and a space-time regularization of $U$, namely
$$\rho^n_0:= \frac{(\rho_0\ast\varphi^n)}{c_n} 1_{B(0,n)}, \quad\quad U_t^n(x) := (U \ast \sigma^n) \psi^n, $$
where $c_n{ \uparrow 1}$ is chosen so that $\rho^n_0$ is a probability measure on $\R^3$.
Let $\rho^n_t$ be the solution of the continuity equation
$$\partial_t \rho^n_t +\nabla \cdot(U^n_t\rho^n_t) = 0 \quad\quad\text{in $\re^3\times [0,\infty)$}$$
with initial datum $\rho^n_0$. From the regularity of the velocity field $U^n_t$ and of the initial datum $\rho^n_0$ we have that 
$\rho^n \in C^\infty(B(0,{n}) \times [0,\infty))$.

Since $U_t$ is divergence-free and satisfies the inequality $|U_t(x)| \leq |x|+\diam$, we get
$$|U^n_t|(x) \leq |U \ast \sigma^n| (x) \leq \| U_t\|_{L^\infty(B(x, {1/n}))} \leq |x|+\diam +\frac 1n \leq |x|+\diam +1,$$
$$|\nabla \cdot U^n_t|(x) = |(U_t \ast \sigma^n)\cdot \nabla \psi^n|(x)\leq \frac{2 (n+{1+}\diam)}{n} \leq 3$$
for $n$ large enough.
Moreover, from the properties of $\rho_0$ we obtain that, for $n$ large enough,
$$\rho^n_0(x) \leq  \frac{2c_0}{(|x|-1/n)^K} \leq \frac{4 c_0}{|x|^K} \qquad \forall \,|x|\geq M+2,$$
$$\| \rho^n_0 \|_\infty \leq 2\| \rho_0\|_\infty \qquad \mbox{and}\qquad
\Big\|\frac{1}{ \rho^n_0}\Big \|_{L^\infty(B(0,n))} \leq \Big\| \frac{1}{ \rho_0}\Big \|_{L^\infty(B(0,{n+1}))}.$$

Hence the hypotheses of Lemma~\ref{lemma:dec-est} are satisfied with ${N}= 3$, $A=1$, $D= \diam +1$, ${d_0} = 4 c_0$.
Moreover $\rho^n_t$ vanishes outside $B(0,n)$,
and by \eqref{ts:supp-rho}
there exist {constants $\lambda_{n}:=e^{-3T}\Big\| \frac{1}{ \rho_0}\Big \|_{L^\infty(B(0,{n+1}))}^{-1}>0$, $\Lambda:=2e^{3T}\|\rho_0\|_\infty$}, and
$M_1$, $c_1$ depending on $T, M, c_0, \diam$ only, such that 
$$\lambda_{n,T}\leq\rho^n_t(x)\leq \Lambda \qquad\forall\, (x,t)\in B(0,n)\times [0,T],$$
$$\rho^n_t(x) \leq \frac{c_1}{|x|^K} \qquad \text{whenever $|x|\geq M_1$.}$$
(Observe that $\lambda_{n}$ depends on $n$, but the other constants are all independent of $n$.)
Thus, from Statement (ii) of Lemma \ref{lemma:dec-est} we get that, for all $r>0$,
\begin{equation}\label{eq:bb}
\rho^n_t(x) \ge e^{-3T}\inf\Big\{\rho^n_0(y):\ y\in B\Big(0,re^{t}+(\diam +1)[e^{t}-1]\Big)\Big\}
\qquad \forall\, (x,t) \in B(0,r)\times[0,T].
\end{equation}
If $n$ is large enough, the right-hand side of \eqref{eq:bb} is different from $0$,
and can be estimated from below in terms of $\rho_0$ by
$$\lambda=\lambda(r,T,\rho_0,\Omega):=e^{-3T}\inf\Big\{\rho_0(y):\ y\in B\Big(0,re^{t}+(\diam +1)[e^{t}-1]+1\Big)\Big\}>0.$$
Therefore, for any $r>0$ we can bound the density $\rho^n$ from below inside
$B(0,r)$ with a constant \textit{independent of $n$}:
\begin{equation}\label{eq:bound-below}
\lambda\leq\rho^n_t(x)\leq \Lambda \qquad\forall\, (x,t)\in B(0,r)\times [0,T].\end{equation}

Let now $P^{n*}_t$ be the unique convex function such that
$P^{n*}_t(0) = 0$ and 
$
(\nabla P^n_t)_\sharp \rho^n_t=\L_{\Omega_n}.
$
From the stability of solutions to the continuity equation  with $BV$ velocity field, \cite[Theorem 6.6]{AM1}, we infer that
\begin{equation}\label{eqn:transp-conv1}
\rho^n_t \to \rho_t\qquad\text{in
$L^1_{\rm loc} (\R^3)$, for any $t>0$,}
\end{equation}  
where $\rho_t$ is the unique solution of \eqref{eqn:dualsystem} corresponding to the velocity field $U$. 
Since $\Omega_n$ is converging to $\Omega$, from standard stability results for
optimal transport maps (see for instance \cite[Corollary 5.23]{V} and \cite[Section 4]{DeFi2}) it follows that
\begin{equation}\label{eqn:transp-conv}
\nabla P^{n*}_t \to \nabla P_t^* \quad\quad\mbox{in }L^1_{\rm loc} (\R^3)
\end{equation}
for any $t>0$.
Moreover, by Theorem \ref{thm:transport reg}(ii), Remark \ref{rmk:set-conv}, and \eqref{eq:bound-below}, for every $k\in\N$
\begin{equation}\label{eqn:lloglbound}
{ \limsup_{n\to \infty}}\int_{B(0,r)}\rho^n_t |\nabla^2 P^{n*}_t| \log^{2k}_+(|\nabla^2
P^{n*}_t|) \, dx  {\,<\infty\qquad\forall \,r>0},
\end{equation}
and by the stability theorem in the Sobolev topology
estabilished in \cite[Theorem 1.3]{DeFi2} it follows that
\begin{equation}\label{eqn:d2-conv-1}
 \lim_{n\to \infty} \int_{B(0,r)}\rho^n_t |\nabla^2 P^{n*}_t| \log^{2k}_+(|\nabla^2 P^{n*}_t|) \, dx
 =  \int_{B(0,r)}\rho_t |\nabla^2 P_t^*| \log^{2k}_+(|\nabla^2 P_t^*|) \, dx\qquad\forall \,r>0.
 \end{equation}
 
Since $(\rho_t^n,U_t^n)$ satisfy the assumptions \eqref{eqn:loep-reg1}
through \eqref{eqn:loep-reg6}, by Step 1 we can apply {\eqref{eqn: tsloepersmooth}} to $(\rho_t^n,U_t^n)$ to obtain
\begin{equation}
\label{ts:loep-n}
\int_{B(0, r)} \rho^n_t |\partial_t \nabla P_t^{n*}| \log^k_+ (|\partial_t \nabla P_t^{n*}|) \, dx
\leq 2^{3(k-1)} \int_{B(0, r)}\rho_t |\nabla^2 P_t^{n*}|
\log^{2k}_+(|\nabla^2 P_t^{n*}|) \, dx+ C
\end{equation}
for all $r<n$, where the constant $C$ does not depend on $n$. 
 
Let $\phi \in C^{\infty}_c((0,T))$ be a nonnegative function. From the Dunford-Pettis Theorem, taking into account \eqref{eqn:transp-conv1} and \eqref{eqn:transp-conv},
it is clear that $ \phi(t)\rho^n_t \partial_t \nabla P^{n*}_t$ converge weakly in $L^1(B(0,r)\times (0,T))$
to $\phi(t) \rho_t\partial_t \nabla P_t^*$. Moreover, since the function
$w\mapsto |w|\log_+^k(|w|/r)$ is convex for every $r\in (0,\infty)$,
 we can apply Ioffe lower semicontinuity theorem \cite[Theorem 5.8]{AFP}  to the functions $\phi(t)\rho^n_t \partial_t \nabla P^{n*}_t$ and $\phi(t)\rho^n_t$ to infer
\begin{equation}\label{eqn:time der}
\begin{split}
\int_0^T\! \phi(t)\int_{B(0,r)}\!\! \rho_t |\partial_t \nabla P_t^*| \log^k_{+} (|\partial_t \nabla P_t^*|) \, dx \, dt
\leq \liminf_{n\to\infty} \int_0^T\! \phi(t) \int_{B(0,r)}\!\! \rho^n_t
|\partial_t \nabla P^{n*}_t| \log^k_{+} (|\partial_t \nabla
P^{n*}_t|) \, dx \, dt.
\end{split}
\end{equation}

Taking \eqref{ts:loep-n}, \eqref{eqn:d2-conv-1},  \eqref{eqn:lloglbound}, and \eqref{eqn:time der} into account, by Lebesgue dominated convergence theorem we obtain 
\begin{multline*}
\int_0^T\phi(t)\int_{B(0,r)} \rho_t |\partial_t \nabla P_t^*| \log^k_+ (|\partial_t \nabla P_t^*|) \, dx \, dt\\
\leq \int_0^T \phi(t) \left(
2^{3(k-1)} \int_{B(0, r)}\rho_t |\nabla^2 P_t^{*}|
\log^{2k}_+(|\nabla^2 P_t^{*}|) \, dx+ C
\right) \, dt.
\end{multline*}
Since this holds for every $\phi \in C^{\infty}_c((0,T))$ nonnegative, by a localization argument we obtain the desired result.
\end{proof}

\begin{remark}\label{exp2}
Thanks to Remark \ref{exp1} one can prove that 
for every $T>0$ and $r>0$ there exist a constant $\kappa>1$ and a constant $C$ which depend on $r$, $\rho_0$, $T$, $\diam$ such that, for almost every $t \in [0,T]$ we have that $\nabla P_t^*\in W^{1,\kappa}(B(0,r)\times [0,\infty);\R^3)$ and
\begin{equation*}
\int_{B(0, r)} \rho_t |\partial_t \nabla P_t^*|^\kappa \, dx
\leq C.
\end{equation*}
This estimate provides better local integrability of the time derivative of $\nabla P_t^*$. The proof follows the same lines of Proposition \ref{prop:est} (see also \cite[Proposition 5.1]{Loe1}). However the exponent $\kappa$ is not universal, but depends in a nontrivial way from the local lower bounds on the density which are related to $r$, $\rho_0$ and $T$. Therefore we preferred to state  Proposition \ref{prop:est} with a universal modulus of integrability.

We finally point out that in the compact setting studied in \cite{acdf} the same argument provides a global $L^\kappa$ estimate of $\partial_t \nabla P_t^*$ on the torus, with $\kappa$ depending only on the upper and lower bound on $\rho_0$, which is also uniform in time.
\end{remark}

\section{Existence of an Eulerian solution}\label{sect:proof thm}

\begin{proof}[Proof of Theorem~\ref{thm:main}]
First of all notice that by statement (ii) of Theorem~\ref{thm:transport reg} and Proposition~\ref{prop:est}, it holds
$|\nabla^2
P_t^*|,\,|\partial_t \nabla P^*_t|\in L_{\rm
loc}^\infty([0,\infty),L^1_{\rm loc}(\R^3))$. Moreover, since $(\nabla
P_t)_\sharp \L^3=\rho_t$, it is immediate to check the
function $u$ in \eqref{defn:ut} is well-defined and $|u|$ belongs
to $L_{\rm loc}^\infty([0,\infty),L^1_{\rm loc}(\R^3))$.

Let $\phi\in C^\infty_c(\Omega\times[0,\infty))$ be a test
function and let us consider $\varphi:
\re^3\times [0,\infty)\to\re^3$ given by
\begin{equation}\label{defn:test}
\varphi_t(y) := y \phi_t(\nabla P_t^*(y)).
\end{equation}
Clearly $\varphi$ is compactly supported in time because so is $\phi$; moreover $P_t$ are Lipschitz on $\supp\,\phi_t$ as $t$ varies in any compact subset of $[0,\infty)$ with bounded Lipschitz constants. Hence the set $\nabla P_t(\supp\,\phi_t)$, which contains $\supp\,\varphi_t$, is bounded in space. Therefore $\phi_t$ is compactly supported in $\R^3 \times [0,\infty)$.
Moreover, Proposition~\ref{prop:est} implies that $\varphi\in
W^{1,1}(\re^3\times[0,\infty))$. So, by Lemma~\ref{rmk:scontr},
each component of the function $\varphi_t(y)$ is an admissible test
function for \eqref{eqn:sg-dual-weak}. For later use, we write down
explicitly the derivatives of $\varphi$:
\begin{equation}\label{eqn:ber1}
\begin{cases}
\partial_t\varphi_t(y)=y[\partial_t\phi_t](\nabla P_t^*(y)) + y \bigl([\nabla\phi_t](P_t^*(y))\cdot
\partial_t\nabla P_t^*(y)\bigr),
\\
\nabla\varphi_t(y)=Id \,\phi_t(\nabla P_t^*(y))+
y\otimes \bigl([\nabla^T\phi_t](P_t^*(y))\nabla^2P_t^*(y)\bigr).
\end{cases}
\end{equation}
Taking into account that $(\nabla P_t)_\sharp \L_{\Omega}=\rho_t\L^3$
and that $[\nabla P_t^*](\nabla P_t(x))=x$ almost everywhere, we can rewrite the
boundary term in \eqref{eqn:sg-dual-weak} as
\begin{equation}
\label{eqn:test-1} \int_{\R^3} \varphi_0(y) \rho_0(y) \, dy
 =  \int_{\Omega} \nabla {P_0}(x)\phi_0(x) \, dx.
\end{equation}
In the same way, since $U_t(y)=J(y-\nabla P_t^*(y))$, we can use
\eqref{eqn:ber1} to rewrite the other term as
\begin{equation}
\label{eqn:test-12}
\begin{split}
\int_0^\infty \int_{\R^3} \Big\{ \partial_t \varphi_t(y) +&
\nabla \varphi_t(y) \cdot U_t(y)\Big\} \rho_t(y)\, dy \, dt\\
& = \int_0^\infty  \int_{\Omega}\Big\{ \nabla P_t(x) \partial_t\phi_t(x)
+ \nabla P_t(x) \bigl(\nabla\phi_t(x)\cdot [\partial_t \nabla P_t^*]( \nabla P_t(x))\bigr)\\
&+ \bigl[ Id \,\phi_t(x) + \nabla
P_t(x) \otimes \bigl(\nabla^T\phi_t(x)\nabla^2 P_t^*( \nabla
P_t(x))\bigr)\bigr]J(\nabla P_t(x)- x) \Big\}\, dx \, dt
\end{split}
\end{equation}
which, taking into account the formula \eqref{defn:ut} for $u$,
after rearranging the terms turns out to be equal to
\begin{equation}\label{eqn:test-2}
 \int_0^\infty\int_{\Omega}
 \nabla P_t(x) \Big\{\partial_t \phi_t(x) + u_t(x)\cdot \nabla
 \phi_t(x)\Big\}+J \Big\{\nabla P_t(x)-x \Big\} \phi_t(x)\, dx \, dt.
\end{equation} 
Hence, combining (\ref{eqn:test-1}), \eqref{eqn:test-12},
(\ref{eqn:test-2}), and (\ref{eqn:sg-dual-weak}), we obtain the validity of
(\ref{eqn:sg1-weak}).

Now we prove (\ref{eqn:sg2-weak}). Given $\phi\in
C_c^\infty(0,\infty)$ and $\psi\in
C^\infty_c({\Omega})$, let us consider $\varphi:\re^3\times
[0,\infty)\to \re$ defined by
\begin{equation}
\varphi_t(y) := \phi(t)\psi(\nabla P_t^*(y)).
\label{defn:test-div}
\end{equation}
As in the previous case, $\varphi\in W^{1,1}(\R^3\times [0,\infty))$ and is compactly supported in time and space, so
we can use $\varphi$ as a test function in
(\ref{eqn:sg-dual-weak}). Then, identities analogous to
\eqref{eqn:ber1} yield
\begin{equation*}
\begin{split}
0& = \int_0^{\infty}\int_{\R^3} \left\{ \partial_t \varphi_t(y) +
\nabla \varphi_t(y) \cdot U_t(y)\right\} \rho_t(y)\, dy \, dt\\
& =\int_0^{\infty} \phi'(t)\int_{\Omega} \psi(x)\, dx \, dt\\
&\phantom{A}
 +  \int_0^{\infty} \phi(t)
 \int_{\Omega}\Big\{\nabla\psi(x)\cdot \partial_t \nabla P_t^*( \nabla P_t(x))
  +\nabla^T \psi(x)\nabla^2 P_t^*( \nabla P_t(x))J( \nabla P_t(x) - x) \Big\}\, dx \, dt\\
& = \int_0^{\infty} \phi(t) \int_{\Omega} \nabla \psi(x)\cdot u_t(x)
\, dx \, dt.
\end{split}
\end{equation*}
Since $\phi$ is arbitrary we obtain
$$
\int_{\Omega } \nabla \psi(x)\cdot u_t(x) \, dx =0 \qquad \mbox{for
a.e. $t>0$.}
$$
By a standard density argument it follows that the above equation holds
outside a negligible set of times
independent of the test function $\psi$, thus proving \eqref{eqn:sg2-weak}.
\end{proof}

\end{document}